\pgfplotsset{compat=1.10}
\newcommand{\E}{\mathbb{E}}
\newcommand{\Prob}{\mathbb{P}}
\newcommand{\totvar}{|\Delta|}
\newcommand{\R}{\mathbb{R}}
\newcommand{\Z}{\mathbb{Z}}
\date{\today} % The date
\newtheorem{theorem}{Theorem}
\newtheorem{lemma}{Lemma}
\theoremstyle{definition}
\newtheorem{definition}{Definition}
\newtheorem{assumption}{Assumption}
\journal{arXiv}
\begin{document}

\begin{frontmatter}

\title{Parametric error bounds for convex approximations of two-stage mixed-integer recourse models with a random second-stage cost vector}

%% Group authors per affiliation:

%% or include affiliations in footnotes:
\author[NTNU]{E. Ruben van Beesten\corref{correspondingauthor}}
\ead{ruben.van.beesten@ntnu.no}
\cortext[correspondingauthor]{Corresponding author}

\author[RuG]{W. Romeijnders}
\ead{w.romeijnders@rug.nl}

\address[NTNU]{Norwegian University of Science and Technology (NTNU), Department of Industrial Economics and Technology Management, NO-7491, Trondheim, Norway}
\address[RuG]{University of Groningen, Faculty of Economics and Business, Nettelbosje 2, 9747 AE, Groningen, the Netherlands}

\begin{abstract}
We consider two-stage recourse models in which the second-stage problem has integer decision variables and uncertainty in the second-stage cost vector, technology matrix, and the right-hand side vector. Such mixed-integer recourse models are typically non-convex and thus hard to solve. There exist convex approximations of these models with accompanying error bounds. However, it is unclear how these error bounds depend on the distributions of the second-stage cost vector $q$. In fact, the only error bound that is known hinges on the assumption that $q$ has a finite support. In this paper, we derive \textit{parametric} error bounds whose dependence on the distribution of $q$ is explicit and that hold for any distribution of $q$, provided it has a finite expected $\ell_1$-norm. We find that the error bounds scale linearly in the expected value of the $\ell_1$-norm of $q$.
\end{abstract}

\begin{keyword}
Mixed-integer recourse models, convex approximations, parametric error bounds
\end{keyword}

\end{frontmatter}

\section{Introduction} \label{sec:introduction}

Two-stage mixed-integer recourse (MIR) models are a class of models that can be used to solve optimization problems under uncertainty. MIR models combine two computational difficulties: uncertainty of some of the model parameters and integer restrictions on some of the decision variables. These integer restrictions cause MIR models to be generally non-convex and hence, extremely hard to solve. Traditional solution methods for MIR models typically combine ideas from deterministic mixed-integer programming and stochastic continuous programming, see, e.g., \cite{Ahmed2004,Caroe1999,Laporte1993,ntaimo2010disjunctive,schultz1998solving,Sen2005}, and the survey papers \cite{kleinhaneveld1999,schultz2003stochastic,sen2005algorithms}. However, due to their reliance on non-convex optimization methods, these methods can have difficulty solving large-scale problems.

One alternative approach is to approximate the original non-convex MIR model by a convex model. Such a \textit{convex approximation} model can be solved efficiently using convex optimization techniques, thus overcoming the computational difficulties inherent in MIR models. The obvious drawback of this approach is that we only obtain an \textit{approximate} solution to the original MIR model, which may or may not be of good quality. Hence, performance guarantees are needed that ensure that the solution to the convex approximation model performs well for the original MIR model. 

In the literature, convex approximations with corresponding performance guarantees have been derived in the form of \textit{error bounds}: upper bounds on the approximation error \cite{romeijnders2016general,vanbeesten2020convex,vanderlaan2020LBDA}. These error bounds are small (and hence, the approximation is good) if the distribution of the second-stage right-hand-side vector is highly dispersed \cite{romeijnders2017assessing}. Analogous results are missing for the distribution of the second-stage cost vector, denoted $q$, however. In the literature, only non-parametric error bounds are known that implicitly depend on $q$. What is more, these error bounds are limited to the case where the support of $q$ is finite. In this paper, we derive \textit{parametric} error bounds that explicitly depend on (the distribution) of $q$ and that hold under mild assumptions on the distribution of $q$.

Mathematically, we consider two-stage MIR models of the form
\begin{align}
    \min_{x \in X} \{ c^\top x + \underbrace{\E^{\Prob}\big[v^q(h - Tx)\big]}_{Q(x)} \}, \label{eq:model}
\end{align}
where $x$ is the first-stage decision vector to be chosen from the feasible set $X \subseteq \R^{n_1}$, so as to minimize the sum of the first-stage costs $c^\top x$ and the expected second-stage costs. The second-stage costs are given by the value function $v^q(h - Tx)$, which depends on the first-stage decision $x$ and the value of the random variable $\xi := (q, T, h)$ with range $\Xi := \Xi^q \times \Xi^T \times \Xi^h$. For every $q \in \Xi^q$, the value function is defined as
\begin{align*}
    v^q(s) := \min_{y \in Y} \{ q^\top y \ | \ Wy = s \}, \quad s \in \R^m, %\label{eq:def_v}
\end{align*}
where $y$ is the second-stage decision vector to be chosen from the mixed-integer feasible set $Y := \Z_+^{n_2} \times \R_+^{\bar{n}_2}$ at a cost $q$, while satisfying the constraint $Wy = s$. The integer restrictions on $y$ cause the value function $v^q$, and consequently, also the recourse function $Q$, to be typically non-convex.

In the literature, several convex approximations $\tilde{Q}$ of $Q$ have been proposed \cite{kleinhaneveld2006simple,vlerk2004,vlerk2010}, and corresponding error bounds have been derived: upper bounds on the maximum approximation error $\| Q - \tilde{Q} \|_\infty := \sup_{x \in \R^{n_1}} | Q(x) - \tilde{Q}(x) |$ \cite{romeijnders2016general,romeijnders2015,romeijnders2016tu,vanbeesten2020convex,vanderlaan2020LBDA,vanderlaan2018higher}. However, the dependence of these error bounds on (the distribution of) $q$ has mainly been neglected in the literature. In most papers, $q$ is assumed to be fixed and non-parametric error bounds are derived that depend on $q$ implicitly. An exception exists for the special case of simple integer recourse (SIR) with $q$ deterministic, for which parametric error bounds are derived that scale linearly in the sum of the (assumed non-negative) elements of $q$ \cite{kleinhaneveld2006simple}. Another exception is the appendix of \cite{vanbeesten2020convex}, where non-parametric error bounds are derived for general MIR models with a \textit{random} $q$. These results are quite limited, though, as they only hold under the assumption that $q$ is discretely distributed with a finite support.

We contribute to this literature in two ways. First, we derive parametric error bounds on $\| Q - \tilde{Q} \|_\infty$ under the assumption that $q$ is fixed. We find that these error bounds scale linearly in the $\ell_1$ norm of $q$. Hence, this result can be seen as a generalization of the bounds from \cite{kleinhaneveld2006simple} for SIR models to a much more general setting. Second, we use this result to derive parametric error bounds on $\| Q - \tilde{Q} \|_\infty$ for the case that $q$ is random. It turns out that these error bounds only depend on $q$ through $\E^{\Prob}\big[ \| q \|_1 \big]$, the expected value of the $\ell_1$-norm of $q$, and that the bounds scale linearly in $\E^{\Prob}\big[ \| q \|_1 \big]$. Hence, only the average ``magnitude'' of $q$ is relevant for the error bounds. In particular, in contrast with the distribution of $h$, the dispersion of the distribution of $q$ turns out to be completely irrelevant.

Throughout the paper, we make the following general assumptions. 
\begin{assumption} \label{ass:main}
    We assume that
    \begin{enumerate}[(a)]
        \item for every $q \in \Xi^q$, the recourse is complete and sufficiently expensive, i.e., $-\infty < v^q(s) < + \infty$, for all $s \in \R^m$, \label{ass:complete_suff_exp_recourse}
        \item the expectation of the $\ell_1$ norm of $h$ and $q$ are finite, i.e., $\E^{\Prob}\big[ \| h \|_1 \big] < + \infty$ and $\E^{\Prob}\big[ \| q \|_1 \big] < +\infty$, \label{ass:finite_exp}
        \item $h$ is continuously distributed with joint pdf $f$, and $(q,T)$ and $h$ are pairwise independent, and \label{ass:distributional}
        \item the recourse matrix $W$ is integer. \label{ass:integer_W}
    \end{enumerate}
\end{assumption} 
Assumption~\ref{ass:main}\eqref{ass:complete_suff_exp_recourse}-\eqref{ass:finite_exp} guarantee that the recourse function $Q(x)$ is finite for every $x \in \R^{n_1}$. Assumption~\ref{ass:main}\eqref{ass:distributional} restricts the distribution right-hand side vector $h$ to continuous distributions only. This is in line with the literature and crucial for the total variation-based error bounds that we will derive. The independence assumption in Assumption~\ref{ass:main}\eqref{ass:distributional} is for ease of presentation; similar results can be derived under relaxed versions of this assumption. However, at the very least, full dependence between $(q,T)$ and $h$ should be ruled out. Finally, Assumption~\ref{ass:main}\eqref{ass:integer_W} is required for the derivation of our error bounds in Section~\ref{sec:error_bounds}. However, it is not very restrictive, as any rational matrix can be transformed into an integer matrix by appropriate scaling. 

The remainder of this paper is structured as followed. In Section~\ref{sec:problem_definition} we provide a detailed problem definition. We define two convex approximations and we discuss the main difficulty in deriving error bounds for these approximations. In Section~\ref{sec:properties_of_approx_error} we derive two properties of the value function approximation error: asymptotic periodicity and a uniform upper bound. We use these properties in Section~\ref{sec:error_bounds} to first derive an error bound on $\| Q - \tilde{Q} \|_\infty$ under the assumption that $q$ is fixed, which explicitly depends on $q$. Then, we use this result to derive an error bound when $q$ is random. Finally, Section~\ref{sec:conclusion} concludes the paper.

\section{Problem definition} \label{sec:problem_definition}

In this section we provide a detailed problem definition. First, in Section~\ref{subsec:convex_approximations} we define two convex approximations from the literature. Second, in Section~\ref{subsec:error_boundss} we review the most general (non-parametric) error bounds from the literature and we discuss why extending these to parametric error bounds in (the distribution of) $q$ is non-trivial. This motivates our analysis in the subsequent sections.

\subsection{Convex approximations} \label{subsec:convex_approximations}

In the literature, several authors have developed convex approximations of MIR models \cite{kleinhaneveld2006simple,romeijnders2016general,romeijnders2015,romeijnders2016tu,vanderlaan2020LBDA,vlerk2004,vlerk2010}. From these, we consider two particular convex approximations: the \textit{shifted LP-relaxation approximation} \cite{romeijnders2016general} and the \textit{$\alpha$-approximation} \cite{vanderlaan2020LBDA}. In this subsection, we straightforwardly extend these definitions to our setting where $q$ has an arbitrary support.

The starting point for defining both convex approximations is the dual representation of the LP-relaxation of the value function $v^q$, given by
\begin{align*}
    v^q_{\text{LP}}(s) &= \max_{\lambda \in \R^m} \{ \lambda^\top s \ | \ \lambda^\top W \leq q \} \\
    &= \max_{k \in K^q} \{ (\lambda^q_k)^\top s \}, \qquad\qquad\qquad s \in \R^m,
\end{align*}
where $\lambda^q_k$, $k \in K^q$, are the vertices of the dual feasible region $\{ \lambda \in \R^m \ | \ \lambda^\top W \leq q \}$, defined as $\lambda^q_k := q_{B^k}^\top(B^k)^{-1}$, where $B^k$ is the corresponding dual feasible basis matrix, and $q_{B^k}$ is the vector of elements of $q$ that correspond to the basis matrix $B^k$, $k \in K^q$. The index set $K^q$ denotes which basis matrices are dual feasible for $q \in \Xi^q$. Using a result from the literature (see Theorem~2.9 from \cite{romeijnders2016general}), we can partition the domain $\R^m$ of $v^q_{\text{LP}}$ into cones $\Lambda^k := \{ s \in \R^m \ | \ (B^k)^{-1} s \geq 0 \}$, $k \in K^q$, such that the dual vertex $\lambda^q_k$ is optimal whenever $s \in \Lambda^k$, i.e.,
\begin{align*}
    v^q_{\text{LP}}(s) = (\lambda^q_k)^\top s, \quad s \in \Lambda^k.
\end{align*}

We can derive a similar partial representation for $v^q$ itself. First write $v^q(s) = v^q_{\text{LP}}(s) + \psi^q(s)$, so $\psi^q$ represents the ``cost of the integer restrictions''. Using Gomory relexations of the value function $v^q$, Romeijnders et al. \cite{romeijnders2016general} show that $\psi^q$ is $B^k$-periodic on shifted versions of the cones $\Lambda^k$, $k \in K^q$.

\begin{definition}\label{def:B-periodicity}
    Let $f: \R^m \to \R$ and $B \in \R^{m \times m}$ be given. Then, $f$ is \textit{$B$-periodic} if $f(s) = f(s + B \ell)$ for every $s \in \R^m$ and $\ell \in \Z^m$. 
\end{definition}

\begin{lemma} \label{lemma:psi}
    Let $q \in \Xi^q$ be given and consider the value function $v^q$ and its LP-relaxation $v^q_{\text{LP}}$. Then, there exist constants $d^k > 0$, $k \in K^q$, such that 
    \begin{align*}
        v^q(s) = (\lambda^q_k)^\top s + \psi^q_k(s), \quad s \in \Lambda^k(d^k),
    \end{align*}
    where $\psi^q_k$ is a $B^k$-periodic function and $\Lambda^k(d^k) := \{ s \in \R^m \ | \ \mathcal{B}(s,d^k) \subseteq \Lambda^k \}$, where $\mathcal{B}(s,d^k) := \{ t \in \R^m \ | \ \| t - s \|_2 \leq d^k \}$ is the closed ball of radius $d^k$ centered at $s$.
\end{lemma}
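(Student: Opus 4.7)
The plan is to separate out the linear part coming from the optimal dual vertex of the LP relaxation and then show that the remaining ``integer gap'' is $B^k$-periodic on a sufficiently interior subregion of $\Lambda^k$, from which the periodic extension $\psi^q_k$ can be defined. Fixing $q \in \Xi^q$ and $k \in K^q$, Theorem~2.9 of \cite{romeijnders2016general} (already invoked in the paragraph preceding the lemma) guarantees that $\lambda^q_k = q_{B^k}^\top (B^k)^{-1}$ is the LP-dual optimal vertex for every $s \in \Lambda^k$, so $v^q_{\text{LP}}(s) = (\lambda^q_k)^\top s$ on $\Lambda^k$. Defining the integer gap $\psi^q(s) := v^q(s) - v^q_{\text{LP}}(s)$, the lemma reduces to producing a $B^k$-periodic function $\psi^q_k$ that agrees with $\psi^q$ on $\Lambda^k(d^k)$ for some $d^k > 0$.

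Next, I would establish the periodicity of $\psi^q$ on a suitable subset of $\Lambda^k$ via a Gomory-style shift argument. For $\ell \in \Z^m$, the identity $(B^k)^{-1}(s + B^k \ell) = (B^k)^{-1}s + \ell$ shows that the fractional part $\{(B^k)^{-1}s\}$ is invariant under the shift $s \mapsto s + B^k \ell$. Consequently, an optimal second-stage decision $y^*$ at $s$ can be mapped to a feasible and optimal decision at $s + B^k \ell$ by adding $\ell$ to the components of $y^*$ indexed by $B^k$; the associated cost change equals $q_{B^k}^\top \ell = (\lambda^q_k)^\top B^k \ell$, yielding $v^q(s + B^k \ell) = v^q(s) + (\lambda^q_k)^\top B^k \ell$ and therefore $\psi^q(s + B^k \ell) = \psi^q(s)$ whenever the shift argument is valid. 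Assumption~\ref{ass:main}\eqref{ass:integer_W} enters here to ensure that the shifted solution is still in the mixed-integer set $Y$.

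The main obstacle is ensuring a \emph{uniform} buffer radius $d^k$ so that the shift argument above applies for every $s \in \Lambda^k(d^k)$ and every $\ell \in \Z^m$ used in the periodic extension. Naively, such shifts can push basic variables negative or pull $s$ out of $\Lambda^k$, invalidating the construction; moreover, the optimal second-stage $y^*$ at $s$ may not itself be basic, so additional slack is needed to absorb rounding corrections coming from the Gomory group-relaxation representation. Following the Gomory-function analysis in \cite{romeijnders2016general}, one can nevertheless choose $d^k > 0$ to depend only on $B^k$, independently of $s$ and of the shift $\ell$. With such a $d^k$ in hand, $\psi^q_k$ is defined as the $B^k$-periodic extension of $\psi^q$ restricted to $\Lambda^k(d^k)$, which completes the proof. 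I expect this uniform buffer estimate to be the technical crux, and I would handle it by closely following the construction in the cited reference.
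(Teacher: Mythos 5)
The paper's own ``proof'' is just a citation of Theorem~2.9 in \cite{romeijnders2016general}, and your sketch is a faithful reconstruction of the argument behind that theorem: split off the linear dual term on $\Lambda^k$, obtain $B^k$-periodicity of the integer gap via the lattice shift $s \mapsto s + B^k\ell$, and obtain the uniform interior radius $d^k$ from the Gomory group-relaxation bounds --- with the genuinely hard step (the uniform buffer) correctly identified and deferred to the same source. One small inaccuracy: Assumption~\ref{ass:main}\eqref{ass:integer_W} is not what keeps the shifted solution in $Y$ (adding $\ell \in \Z^m$ to the basic components preserves integrality and only nonnegativity is at risk, regardless of $W$); integrality of $W$ is instead what makes the Gomory group finite and hence yields the uniform bound defining $d^k$.
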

\begin{proof}
    See Theorem~2.9 in \cite{romeijnders2016general}.
\end{proof}

From Lemma~\ref{lemma:psi} we learn that, at least on the shifted cones $\Lambda^k(d^k)$, $k \in K^q$, convexity of $v^q$ is destroyed by the periodicity of the function $\psi^q_k$. This observation has led to the proposal of two convex approximations of $v^q$, based on ``convexifying'' adjustments of the functions $\psi^q_k$, $k \in K^q$: the \textit{shifted LP-relaxation approximation} and the \textit{$\alpha$-approximation}.

The shifted LP-relaxation approximation $\hat{v}^q$ is constructed by replacing the periodic function $\psi^q_k$ by its mean value $\Gamma^q_k$. Since every $B^k$-periodic function is also $p_k I_m$-periodic with $p_k := |\det(B^k)|$ (see \cite{romeijnders2016general}), this mean value can be defined as
\begin{align}
	\Gamma^q_k := p_k^{-m} \int_0^{p_k} \cdots \int_{0}^{p_k} \psi^q_k(s) ds_1 \cdots ds_m. \label{eq:def_Gamma_k}
\end{align}	
Taking the maximum over all $k \in K^q$ then yields the approximation.

\begin{definition} \label{def:shifted_LP-relaxation}
    Consider the value function $v^q$ for a given value of $q \in \Xi^q$. Then, its \textit{shifted LP-relaxation approximation} is given by
    \begin{align*}
        \hat{v}^q(s) &:= \max_{k \in K^q} \big\{ (\lambda^q_k)^\top s + \Gamma^q_k \big\}, \quad s \in \R^m,
    \end{align*}
    where $\Gamma^q_k$ is the mean value from \eqref{eq:def_Gamma_k}. The corresponding shifted LP-relaxation approximation of the recourse function $Q$ is defined as
    \begin{align*}
        \hat{Q}(x) := \E^{\Prob}\big[ \hat{v}^q(h - Tx) \big], \quad x \in \R^{n_1}.
    \end{align*}
\end{definition}

To construct the $\alpha$-approximation $\tilde{v}_\alpha$ of $v^q$, we replace the term $Tx$ in $\psi^q_k(h - Tx)$ by a constant vector $\alpha \in \R^m$, yielding $\psi^q_k(h - \alpha)$. Note that we treat $h$ and $Tx$ differently, hence the $\alpha$-approximation is defined as a function of $h$ and $Tx$ separately instead of $s = h - Tx$. Again, we take the maximum over all $k \in K^q$ to yield our approximation.

\begin{definition} \label{def:alpha-approximation}
    Consider the value function $v^q$ for a given value of $q \in \Xi^q$. Then, its \textit{$\alpha$-approximation} is given by
    \begin{align*}
        \tilde{v}_\alpha^q(h,Tx) &:= \max_{k \in K^q} \big\{ (\lambda^q_k)^\top (h - Tx) + \psi^q_k(h - \alpha) \big\}, 
    \end{align*}
    $h \in \R^m$, $Tx \in \R^m$. The corresponding $\alpha$-approximation of the recourse function $Q$ is defined as
    \begin{align*}
        \tilde{Q}_\alpha(x) := \E^{\Prob}\big[ \tilde{v}_\alpha^q(h,Tx) \big], \quad x \in \R^{n_1}.
    \end{align*}
\end{definition}

\subsection{Error bounds} \label{subsec:error_boundss}
In this subsection we discuss non-parametric error bounds from the literature and we discuss why extending them to error bounds that are parametric in (the distribution of) $q$ is not trivial. In our discussion we focus on the shifted LP-relaxation approximation; the analysis for the $\alpha$-approximation is completely analogous. 

As a starting point, we take a result from the literature that provides a non-parametric error bound under the assumption that $q$ and $T$ are fixed.  Consider the shifted LP-relaxation approximation $\hat{Q}$ from Definition~\ref{def:shifted_LP-relaxation} under the assumption that $q$ and $T$ are fixed. Romeijnders et al. \cite{romeijnders2016general} derive an error bound for this setting. We restate the result here after providing two definitions.

\begin{definition}\label{def:total_variation}
    Let $f: \mathbb{R}  \to \mathbb{R}$ be a real-valued function and let $I \subset \mathbb{R}$ be an interval. Let $\Pi(I)$ denote the set of all finite ordered sets $P = \{z_1, \ldots, z_{N+1} \}$ with $z_1 < \cdots < z_{N+1}$ in $I$. Then, the \emph{total variation} of $f$ on $I$, denoted by $|\Delta|f(I)$, is defined by
	\begin{align*}
	    | \Delta | f(I) := \sup_{P \in \Pi(I)} V_f(P),
	\end{align*}
	where $V_f(P) := \sum_{i = 1}^{N} | f(z_{i+1}) - f(z_i) |$. We write $|\Delta|f := |\Delta|f(\mathbb{R})$. We say that $f$ is of \emph{bounded variation} if $|\Delta|f < +\infty$.
\end{definition}

\begin{definition} \label{def:H^m}
	We denote by $\mathcal{H}^m$ the set of all $m$-dimensional joint pdfs $f$ whose one-dimensional conditional density functions $f_i(\cdot|t_{-i})$ are of bounded variation for all $t_{-i} \in \R^{m-1}$, $i=1,\ldots,m$.
\end{definition}

\begin{lemma}\label{lemma:error_bound_literature}
    Consider the recourse function $Q$ and its shifted LP-relaxation approximation $\hat{Q}$ from Definition~\ref{def:shifted_LP-relaxation} and assume that $q \in \Xi^q$ and $T \in \Xi^T$ are fixed. Then, there exists a finite constant $\tilde{C} > 0$, not depending on $T$, such that for all $f \in \mathcal{H}^m$, we have
    \begin{align*}
        \| Q - \hat{Q} \|_\infty &\leq \tilde{C} \sum_{i=1}^m \E^{h_{-i}}\big[ \totvar f_i(\cdot | h_{-i}) \big], \quad x \in \R^{n_1}.
    \end{align*}
\end{lemma}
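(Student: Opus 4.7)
The plan is to reduce $\|Q - \hat Q\|_\infty$ to an integral of the pointwise value function error $\Delta^q(s) := v^q(s) - \hat v^q(s)$ against the density of $h$, and then exploit the periodic structure revealed by Lemma~\ref{lemma:psi}. For any fixed $x \in \R^{n_1}$,
$$|Q(x) - \hat Q(x)| \le \int_{\R^m} |\Delta^q(h - Tx)|\, f(h)\, dh,$$
so it suffices to bound this integral, uniformly in $x$, by $\tilde C \sum_{i=1}^m \E^{h_{-i}}[\totvar f_i(\cdot | h_{-i})]$, with a constant $\tilde C$ independent of $T$.

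Next I would partition $\R^m$ into the shifted cones $\Lambda^k(d^k)$, $k \in K^q$, together with a residual ``boundary region'' $R^q$ consisting of points within distance $\max_k d^k$ of some cone boundary. On each $\Lambda^k(d^k)$ Lemma~\ref{lemma:psi} gives $v^q(s) = (\lambda^q_k)^\top s + \psi^q_k(s)$ with $\psi^q_k$ being $B^k$-periodic, while by Definition~\ref{def:shifted_LP-relaxation} we have $\hat v^q(s) \ge (\lambda^q_k)^\top s + \Gamma^q_k$ globally, with equality wherever $k$ attains the max. Hence, on the subset of $\Lambda^k(d^k)$ on which $k$ is optimal in the outer maximum, the error reduces to $\psi^q_k(s) - \Gamma^q_k$, a bounded $B^k$-periodic function with zero mean over any fundamental cell by the choice of $\Gamma^q_k$ in~\eqref{eq:def_Gamma_k}.

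The core estimate is a total-variation bound for integrals of zero-mean periodic functions against a density: if $g$ is $p I_m$-periodic with zero mean over $[0,p]^m$ and $|g| \le M$, then
$$\bigg|\int_{\R^m} g(h - Tx)\, f(h)\, dh\bigg| \le c(M,p) \sum_{i=1}^m \E^{h_{-i}}\big[\totvar f_i(\cdot | h_{-i})\big],$$
independently of the shift $Tx$. The one-dimensional version follows by writing $g$ as the derivative of a bounded periodic primitive and applying integration by parts; the $m$-dimensional version is obtained by iterated conditioning on the remaining coordinates, which explains the coordinate-wise total variation and the assumption $f \in \mathcal{H}^m$. Applying this to $g = \psi^q_k - \Gamma^q_k$ on each cone, combined with a uniform $L^\infty$ bound on $\Delta^q$ over the narrow boundary region $R^q$ (whose $f$-measure is itself controlled by $\sum_i \E[\totvar f_i(\cdot|h_{-i})]$), and summing over $k \in K^q$ yields the claimed estimate.

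The main obstacle is two-fold. First, one must handle the boundary region $R^q$: the representation in Lemma~\ref{lemma:psi} only gives $\psi^q_k$ inside $\Lambda^k(d^k)$, so a uniform $L^\infty$ bound on $|\Delta^q|$ over all of $\R^m$ is needed to control this remainder (precisely the second property established in Section~\ref{sec:properties_of_approx_error}). Second, the periodic-function-vs.-total-variation estimate must be made quantitative with a constant independent of $T$; independence follows because translating $f$ by $Tx$ preserves the total variation of each conditional density $f_i(\cdot | h_{-i})$, but the multivariate iteration must carefully exploit that any $B^k$-periodic function is also $p_k I_m$-periodic with $p_k = |\det B^k|$, so that the axis-aligned total variation of $f$ is what ultimately controls the integral.
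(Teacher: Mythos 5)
Your overall strategy is the right one: it is essentially the proof of Theorem~5.1 in \cite{romeijnders2016general}, which is all the paper itself offers for this lemma (a citation), and it is the same architecture the paper re-executes in Lemma~\ref{lemma:bound_given_q} --- split the domain into shifted cones where the error is a zero-mean periodic function, apply a total-variation estimate for integrals of such functions against a density, and mop up the remainder with a uniform sup-norm bound on $v^q-\hat v^q$. However, your very first reduction would fail if taken literally. You reduce to bounding $\int_{\R^m}|\Delta^q(h-Tx)|f(h)\,dh$, with the absolute value \emph{inside} the integral. That quantity cannot be bounded by $\tilde C\sum_i\E^{h_{-i}}[\totvar f_i(\cdot|h_{-i})]$: on a cone the integrand is $|\psi^q_k-\Gamma^q_k|$, a nonnegative periodic function with strictly positive mean, so for a very flat density (tiny total variation) the integral stays bounded away from zero while the right-hand side tends to zero. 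The entire point of the periodic-cancellation lemma is that the absolute value must stay \emph{outside} each cone integral, i.e., one needs $|\int_{\text{cone}}(v^q-\hat v^q)g|\le\sum_k|\int_{\text{cone}_k}\cdots|+\int_{\mathcal N}|\cdots|$, exactly as in \eqref{eq:term_sigma}--\eqref{eq:term_N}. You do state the core estimate later with the absolute value outside, so the slip is self-correcting, but the opening display is not a valid reduction.

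Two further points are glossed over. First, your core estimate is stated for integrals over all of $\R^m$, but what is actually needed is the version for integrals restricted to a shifted cone (Theorem~4.13 of \cite{romeijnders2016general}); the cone boundary prevents full cancellation of periodic cells and produces extra terms that must themselves be controlled. Second, the residual region is not just a tube around the cone boundaries: it must also absorb the parts of $\Lambda^k(d^k)$ on which the maximum in $\hat v^q$ is attained by some $l\neq k$. Handling this is precisely why one needs the asymptotic-periodicity statement (Proposition~3.7 of \cite{romeijnders2016general}, or Lemma~\ref{lemma:sigma_bar_k} here): it produces shifted cones $\sigma^k+\Lambda^k$ on which \emph{both} the representation of Lemma~\ref{lemma:psi} holds \emph{and} $k$ is optimal for $\hat v^q$, and whose complement is a finite union of slabs whose probability is controlled by the total-variation sum. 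Your sketch asserts the measure bound on the remainder without this structure. These are fixable, and the references supply them, but they are the two genuinely nontrivial ingredients rather than routine details.
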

\begin{proof}
    See Theorem~5.1 in \cite{romeijnders2016general}.
\end{proof}
 
Observe that the constant $\tilde{C}$ depends on $q$, but the dependence structure is not made explicit in the lemma above. Only existence of some constant $\tilde{C}$ is proven. Moreover, we will show that as a result, extending the error bound above to a setting where $q$ is stochastic with an infinite support is non-trivial.

Suppose that $q$ and $T$ are stochastic and that Assumption~\ref{ass:main} holds. Our aim is to find an upper bound on the maximum approximation error $\| Q - \hat{Q} \|_\infty$, i.e., a uniform upper bound on $| Q(x)  - \hat{Q}(x) |$ over all $x \in \R^{n_1}$. By definition, we have for all $x \in \R^{n_1}$,
\begin{align}
    &| Q(x) - \hat{Q}(x) | = \big| \E^{\xi}\big[ v^q(h - Tx) - \hat{v}^q(h - Tx) \big] \big| \nonumber\\
    &\qquad\leq \E\bigg[ \Big| \E^{h|q,T} \big[ v^q(h - Tx) - \hat{v}^q(h - Tx) \big] \Big| \bigg], \label{eq:approx_error_q_t_h}
\end{align}
where we use Jensen's inequality and the fact that $(q,T)$ and $h$ are mutually independent. Applying Lemma~\ref{lemma:error_bound_literature} to the inner expression $\big| \E^{h|q,T} \big[ v^q(h - Tx) - \hat{v}^q(h - Tx) \big] \big|$ yields, for every $x \in \R^{n_1}$,
\begin{align}
    |Q(x) - \hat{Q}(x)| &\leq \E\big[ \tilde{C}^q \big] \sum_{i=1}^m \E^{h_{-i}}\big[ \totvar f_i(\cdot | h_{-i}) \big], \label{eq:error_bound_infinite}
\end{align}
where $\tilde{C}^q$ is the constant from Lemma~\ref{lemma:error_bound_literature} corresponding to $q \in \Xi^q$. If $q$ has a finite support, then the fact that $\tilde{C}^q$ is finite for every $q \in \Xi^q$ guarantees that the expected value $\E\big[ \tilde{C}^q \big]$ is also finite. This is indeed the approach taken in \cite{vanbeesten2020convex}. This assumption can be quite restrictive, though, as in reality, cost coefficients might be appropriately modeled by, e.g., continuous random variables. If we relax the assumption, however, we cannot guarantee that $\E\big[ \tilde{C}^q \big]$ is finite. Hence, in order to derive finite error bounds that hold for more general distributions of $q$, we need to further investigate the dependence of $\tilde{C}^q$ on $q$.

Although Lemma~\ref{lemma:error_bound_literature} merely claims the existence of some constant $\tilde{C}^q>0$, its proof in \cite{romeijnders2016general} is constructive, i.e., it finds a particular value for $\tilde{C}^q$. However, due to the particular way that $\tilde{C}^q$ is constructed, it turns out that analyzing the dependence structure between $\tilde{C}^q$ and $q$ is extremely difficult. For this reason, we take the following alternative route. First, we derive an alternative to Lemma~\ref{lemma:error_bound_literature}, with an alternative constant $C^q > 0$, whose dependence on $q$ can be expressed explicitly. Then, using this alternative result, we derive an analogue to the error bound \eqref{eq:error_bound_infinite}, which explicitly depends on the distribution of $q$ and which we \textit{can} guarantee to be finite.

\section{Properties of the value function approximation error} \label{sec:properties_of_approx_error}

In this section we derive two properties of the approximation error $\hat{v}^q - v^q$ of the shifted LP-relaxation approximation that will be used to derive our error bounds: asymptotic periodicity and a uniform error bound. Analogous results can be derived for the approximation error $\tilde{v}^q_\alpha - v^q$ of the $\alpha$-approximation; for the sake of brevity we skip the analysis.

\subsection{Asymptotic periodicity} \label{subsec:asymptotic_periodicity}

Consider the shifted LP-relaxation $\hat{v}^q$ from Definition~\ref{def:shifted_LP-relaxation} for a given $q \in \Xi^q$. We will prove that the corresponding approximation error $v^q - \hat{v}^q$ is asymptotically periodic, i.e., we show that on a ``relatively large'' part of its domain, the function $v^q - \hat{v}^q$ is a periodic function. Specifically, we prove that there exist vectors $\bar{\sigma}^k$, $k \in K^q$, such that $v^q - \hat{v}^q$ is $B^k$-periodic on the shifted cone $\bar{\sigma}^k + \Lambda^k$, $k \in K^q$.

In fact, such an asymptotic periodicity result has already been proven in Proposition~3.7 in \cite{romeijnders2016general}. However, in their result, the vector corresponding to $\bar{\sigma}^k$ depends on $q$. Since $q$ is fixed in their paper, this does not hinder their analysis. However, in our setting with a random $q$, this is a crucial obstacle to deriving an asymptotic error bound. We will highlight why this is the case when we derive our error bound in Section~\ref{sec:error_bounds}. Hence, in this section we aim at vectors $\bar{\sigma}^k$ that do not depend on $q$. One complicating factor here is that the index $k$ is taken from a set $K^q$, which depends on $q$. To avoid confusion, we define the set $\bar{K} := \cup_{q \in \Xi^q} := K^q$ of all indices $k$ for which the basis matrix $B^k$ is dual feasible for some $q \in \Xi^q$. Note that $\bar{K}$ is a finite set because $W$ only has a finite number of basis matrices. We will derive a vector $\bar{\sigma}^k$ for every $k \in \bar{K}$. In our analysis we will focus on the shifted LP-relaxation $\hat{v}^q$. The analysis for the $\alpha$-approximation is analogous.

Let $k \in \bar{K}$ and $q \in \Xi^q$ with $k \in K^q$ be given. Then, by Lemma~\ref{lemma:psi} we know that
$v^q(s) = (\lambda^q_k)^\top s + \psi^q_k(s)$ whenever $s \in \Lambda^k(d^k)$. If we can find a vector $\bar{\sigma}^k \in \Lambda^k(d^k)$ such that 
\begin{align}
    \hat{v}^q(s) = (\lambda^q_k)^\top s + \Gamma^q_k, \label{eq:v_hat_opt_k}
\end{align}
for $s \in \bar{\sigma}^k + \Lambda^k$, then because $\bar{\sigma}^k + \Lambda^k \subseteq \Lambda^k(d^k)$, it follows that 
\begin{align}
    v^q(s) - \hat{v}^q(s) = \psi^q_k(s) - \Gamma^q_k, \label{eq:v_approx_error_periodic}
\end{align}
for all $s \in \bar{\sigma}^k + \Lambda^k$. Hence, the approximation error $v^q - \hat{v}^q$ is $B^k$-periodic on the shifted cone $\bar{\sigma}^k + \Lambda^k$ with a mean value of zero (since $\Gamma^q_k$ is the mean value of $\psi^q_k$). It remains to find a vector $\bar{\sigma}^k \in \Lambda^k(d^k)$ that satisfies \eqref{eq:v_hat_opt_k}.

By definition of $\hat{v}^q$, equation \eqref{eq:v_hat_opt_k} is equivalent to the statement that for every $l \in K^q$, we have
\begin{align}
   (\lambda^q_k - \lambda^q_l)^\top s  \geq \Gamma^q_l -  \Gamma^q_k. \label{eq:lambda_Gamma_inequality}
\end{align}
We analyze the left-hand side and right-hand side of the inequality above separately. For the left-hand side we have the following representation.

\begin{lemma} \label{lemma:lambda_diff_B}
	Let $k,l \in \bar{K}$ be given. Define $N^l$ as the matrix consisting of the columns of $W$ that are not columns in the basis matrix $B^k$, and write $q_{N^l}$ for the vector of elements of $q$ corresponding to the columns of $N^l$. For $i=1,\ldots,m$, write $B^k_i \in \mathcal{B}^l$ if the $i$th column of $B^k$ is also a column in $B^l$, corresponding to the same second-stage variable. If $B^k_i \notin \mathcal{B}^l$, then write $j(i)$ for the index of $N^l$ such that $B^k_i = N^l_{j(i)}$, where both columns correspond to the same second-stage variable. Then,	for every $q \in \Xi^q$ for which $k,l \in K^q$, we have
	\begin{align*}
    	\bar{q}^{kl}_i &:= (\lambda^q_k - \lambda^q_l)^\top B^k_i = \begin{cases}
        	0, & \text{if } B^k_i \in \mathcal{B}^l, \\
        	\bar{q}_{N^l_{j(i)}}, &\text{if } B^k_i \notin \mathcal{B}^l,
    	\end{cases}		
	\end{align*}
	where $(\bar{q}_{N^l})^\top := (q_{N^l})^\top - (q_{B^l})^\top(B^l)^{-1}N^l$ denotes the reduced cost of $y_{N^l}$.
\end{lemma}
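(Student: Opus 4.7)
The plan is a direct computation from the definitions $\lambda^q_k = q_{B^k}^\top (B^k)^{-1}$ and $\lambda^q_l = q_{B^l}^\top (B^l)^{-1}$. The key observation is that $(B^k)^{-1} B^k_i = e_i$, the $i$-th standard unit vector in $\R^m$, since $B^k_i$ is the $i$-th column of $B^k$. Consequently,
\begin{align*}
    (\lambda^q_k)^\top B^k_i = q_{B^k}^\top (B^k)^{-1} B^k_i = q_{B^k}^\top e_i = q_{B^k_i},
\end{align*}
where $q_{B^k_i}$ denotes the cost coefficient of the second-stage variable associated with the $i$-th column of $B^k$. Thus the entire proof reduces to evaluating $(\lambda^q_l)^\top B^k_i$ and comparing.

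First I would handle the case $B^k_i \in \mathcal{B}^l$. By assumption, there exists some index $j$ such that $B^l_j = B^k_i$ and this common column corresponds to the same second-stage variable in both bases. Then $(B^l)^{-1} B^k_i = (B^l)^{-1} B^l_j = e_j$, so $(\lambda^q_l)^\top B^k_i = q_{B^l_j} = q_{B^k_i}$, and subtracting gives $\bar{q}^{kl}_i = 0$ as claimed.

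Next I would handle the case $B^k_i \notin \mathcal{B}^l$, where by definition $B^k_i$ equals $N^l_{j(i)}$, the $j(i)$-th column of $N^l$, and both correspond to the same second-stage variable so $q_{B^k_i} = q_{N^l_{j(i)}}$. Here $(\lambda^q_l)^\top B^k_i = q_{B^l}^\top (B^l)^{-1} N^l_{j(i)}$, which is precisely the $j(i)$-th entry of the row vector $q_{B^l}^\top (B^l)^{-1} N^l$. Rearranging the definition of the reduced cost, namely $(\bar{q}_{N^l})^\top = (q_{N^l})^\top - (q_{B^l})^\top (B^l)^{-1} N^l$, the $j(i)$-th entry equals $q_{N^l_{j(i)}} - \bar{q}_{N^l_{j(i)}}$. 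Combining with $(\lambda^q_k)^\top B^k_i = q_{N^l_{j(i)}}$ yields
\begin{align*}
    \bar{q}^{kl}_i = q_{N^l_{j(i)}} - \bigl( q_{N^l_{j(i)}} - \bar{q}_{N^l_{j(i)}} \bigr) = \bar{q}_{N^l_{j(i)}},
\end{align*}
which is the desired expression.

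There is no real obstacle here; the lemma is essentially a bookkeeping statement about the LP dual, and the only point requiring care is matching columns of $B^k$ with columns of $B^l$ or $N^l$ consistently by second-stage variable so that the cost coefficients align. The one subtlety worth flagging explicitly in the write-up is that the hypothesis $k,l \in K^q$ ensures both $\lambda^q_k$ and $\lambda^q_l$ are well-defined (both bases are dual feasible for the given $q$), so the reduced-cost identity can be invoked without qualification.
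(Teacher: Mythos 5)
Your proof is correct and is essentially identical to the paper's own argument: both compute $(\lambda^q_k)^\top B^k_i = q_{B^k_i}$ via $(B^k)^{-1}B^k_i = e_i$, then evaluate $(\lambda^q_l)^\top B^k_i$ in the two cases, using the reduced-cost identity $(q_{B^l})^\top(B^l)^{-1}N^l_{j(i)} = q_{N^l_{j(i)}} - \bar{q}_{N^l_{j(i)}}$ in the non-basic case. No gaps.
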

\begin{proof}
	Let $q \in \Xi^q$ with $k,l \in K^q$ and $i=1,\ldots,m$ be given. Then, $(\lambda^q_k)^\top B^k_i = (q_{B^k})^\top(B^k)^{-1} B^k_i = (q_{B^k})^\top e_i = q_{B^k_i}$, where $e_i \in \R^m$ is the $i$th unit vector. Next, consider $(\lambda^q_l)^\top B^k_i$. If $B^k_i \in \mathcal{B}^l$, then, writing $B^k_i = B^l_{r(i)}$, we have $(\lambda^q_l)^\top B^k_i = (q_{B^l})^\top(B^l)^{-1} B^l_{r(i)} = (q_{B^l})^\top_{r(i)} = q_{B^l_{r(i)}} = q_{B^k_i}$. Conversely, if $B^k_i \notin \mathcal{B}^l$, then we have $(\lambda^q_l)^\top B^k_i = (q_{B^l})^\top(B^l)^{-1} N^l_{j(i)} = q_{N^l_{j(i)}} - \bar{q}_{N^l_{j(i)}} = q_{B^k_i} - \bar{q}_{N^l_{j(i)}}$. Combining these findings yields the result.
\end{proof}

Next, we consider the right-hand side of \eqref{eq:lambda_Gamma_inequality}. We first derive an upper bound $G^q_{kl}$ on the difference $\Gamma^q_l -  \Gamma^q_k$.

\begin{lemma}\label{lemma:G}
	Let $k,l \in \bar{K}$ be given. Then, there exists $t^{kl} \in \R^m_+$, such that for all $q \in \Xi^q$ with $k,l \in K^q$, we have
	\begin{align*}
		\Gamma^q_l - \Gamma^q_k \leq G^q_{kl} := (\bar{q}^{kl})^\top t^{kl}.
	\end{align*}
\end{lemma}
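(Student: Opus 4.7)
The plan is to derive a pointwise bound relating $\psi^q_l$ and $\psi^q_k$ on the cone $\Lambda^k(d^k)$ and then integrate it over a common period to bound the mean difference $\Gamma^q_l - \Gamma^q_k$. Set $P := p_k p_l$; since $\psi^q_k$ is $B^k$-periodic (and therefore $p_k I_m$-periodic, as noted in \cite{romeijnders2016general}) and $\psi^q_l$ is analogously $p_l I_m$-periodic, both are $PI_m$-periodic. By Lemma~\ref{lemma:lambda_diff_B}, $(\bar{q}^{kl})^\top t^{kl} = (\lambda^q_k - \lambda^q_l)^\top B^k t^{kl}$, so it suffices to construct a vector $\sigma \in \Lambda^k$ (yielding $t^{kl} := (B^k)^{-1}\sigma \in \R^m_+$) with $\Gamma^q_l - \Gamma^q_k \leq (\lambda^q_k - \lambda^q_l)^\top \sigma$ and with $\sigma$ chosen independently of $q$.

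The first step is the pointwise inequality
\begin{align*}
    \psi^q_l(s) - \psi^q_k(s) \leq (\lambda^q_k - \lambda^q_l)^\top s, \quad s \in \Lambda^k(d^k).
\end{align*}
The key property is subadditivity of $v^q$: since $Y = \Z_+^{n_2} \times \R_+^{\bar{n}_2}$ is closed under addition, concatenating optima for $s$ and $s'$ gives a feasible point for $s + s'$, hence $v^q(s + s') \leq v^q(s) + v^q(s')$. For any $s \in \R^m$, I would pick $\ell \in \Z_+^m$ large enough that $s + B^l \ell \in \Lambda^l(d^l)$; then $B^l$-periodicity and Lemma~\ref{lemma:psi} give $\psi^q_l(s) = \psi^q_l(s + B^l \ell) = v^q(s + B^l \ell) - (\lambda^q_l)^\top (s + B^l \ell)$. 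Using the integer feasible solution $y_{B^l} = \ell$, $y_{N^l} = 0$, whose cost $q_{B^l}^\top \ell$ coincides with the LP optimum $(\lambda^q_l)^\top B^l \ell$ at $B^l \ell \in \Lambda^l$, I obtain $v^q(B^l \ell) = (\lambda^q_l)^\top B^l \ell$. Subadditivity then collapses the $\ell$-terms and yields $\psi^q_l(s) \leq v^q(s) - (\lambda^q_l)^\top s$ for every $s \in \R^m$. Specializing to $s \in \Lambda^k(d^k)$, where $v^q(s) = (\lambda^q_k)^\top s + \psi^q_k(s)$, proves the claim.

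Next, I would integrate this inequality over a common fundamental domain. Choose $\tau \in \R^m$ deep enough in $\Lambda^k$ so that $\tau + [0,P]^m \subseteq \Lambda^k(d^k)$; this is possible since $\Lambda^k$ is full-dimensional and $d^k$ can be taken uniformly in $q$. By $PI_m$-periodicity,
\begin{align*}
    \Gamma^q_i = P^{-m} \int_{\tau + [0,P]^m} \psi^q_i(s)\, ds, \quad i \in \{k, l\}.
\end{align*}
Subtracting and applying the pointwise bound (valid on the whole cube by choice of $\tau$) gives
\begin{align*}
    \Gamma^q_l - \Gamma^q_k &\leq P^{-m} \int_{\tau + [0,P]^m} (\lambda^q_k - \lambda^q_l)^\top s\, ds \\
    &= (\lambda^q_k - \lambda^q_l)^\top \big(\tau + \tfrac{P}{2}\mathbf{1}\big),
\end{align*}
where $\mathbf{1} = (1, \ldots, 1)^\top$ and I used $\int_{[0,P]^m} u\, du = \tfrac{P^{m+1}}{2}\mathbf{1}$. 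Setting $t^{kl} := (B^k)^{-1}(\tau + \tfrac{P}{2}\mathbf{1}) \in \R_+^m$ (non-negative since $\tau + \tfrac{P}{2}\mathbf{1} \in \Lambda^k$) and applying Lemma~\ref{lemma:lambda_diff_B} columnwise rewrites the right-hand side as $(\bar{q}^{kl})^\top t^{kl} = G^q_{kl}$.

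The main obstacle is guaranteeing that every ingredient of the construction is $q$-independent: $P$, $\tau$ and hence $t^{kl}$ must depend only on $k$ and $l$. The non-trivial part is that $d^k$ in Lemma~\ref{lemma:psi} can be chosen uniformly over $q \in \Xi^q$ with $k \in K^q$; this holds because its derivation in \cite{romeijnders2016general} bounds Gomory-style integer corrections by combinatorial data of $(W, B^k)$ alone. A secondary care point is to push $\tau$ far enough into $\Lambda^k$ that the entire cube $\tau + [0,P]^m$ remains inside $\Lambda^k(d^k)$, so that the pointwise inequality applies throughout the domain of integration.
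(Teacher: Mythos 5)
Your proof is correct and shares the paper's overall skeleton---establish the pointwise inequality $\psi^q_l(s)-\psi^q_k(s)\leq(\lambda^q_k-\lambda^q_l)^\top s$ on $\Lambda^k(d^k)$, average it over a cube of side $p_kp_l$ contained in that set, and convert to the form $(\bar q^{kl})^\top t^{kl}$ via $s=B^kt$ and Lemma~\ref{lemma:lambda_diff_B}---but it differs in two substantive ways. First, the paper obtains the pointwise inequality by citing the Gomory-relaxation ordering $v^q_{B^k}(s)=v^q(s)\geq v^q_{B^l}(s)$ on $\Lambda^k(d^k)$ (Theorem~2.9 and Lemma~2.3 of \cite{romeijnders2016general}), whereas you derive it from scratch using subadditivity of $v^q$ together with the exact identity $v^q(B^l\ell)=(\lambda^q_l)^\top B^l\ell$ for $\ell\in\Z^m_+$; this is more elementary and self-contained, at the small cost of needing attainment of the minimum in $v^q$ (standard for MILPs with rational constraint data) and the observation that the periodic extension of $\psi^q_l$ from $\Lambda^l(d^l)$ is what $\Gamma^q_l$ averages. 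Second, in the final step the paper first uses $\bar q^{kl}\geq 0$ to bound $(\bar q^{kl})^\top t\leq(\bar q^{kl})^\top t^{kl}$ with $t^{kl}_i:=\max\{t_i: t\in\bar C_{kl}\}$ and then integrates, while you evaluate the integral exactly at the centroid of the cube, yielding a (marginally sharper) valid choice of $t^{kl}$ without invoking $\bar q^{kl}\geq 0$ at that point. Both arguments rely on the same unstated but essential fact, which you correctly flag: the radius $d^k$ in Lemma~\ref{lemma:psi}, and hence the cube placement and $t^{kl}$, can be chosen independently of $q$, since they are determined by $W$ and $B^k$ alone.
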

\begin{proof}
	Let $q \in \Xi^q$ with $k,l \in K^q$ be given. Writing $v^q_{B^k}$ for the Gomory relaxation of $v^q$ with respect to the basis matrix $B^k$, we know from Theorem~2.9 in \cite{romeijnders2016general} that $v^q_{B^k}(s) = v^q(s) \geq v^q_{B^l}(s)$ for all $s \in \Lambda^k(d^k)$. By Lemma~2.3 in \cite{romeijnders2016general} it follows that we can represent the Gomory relaxations of $v^q$ as $v_{B^{\bar{k}}}(s) = \lambda^q_{\bar{k}} s + \psi^q_{\bar{k}}(s)$, $s \in \R^m$, $\bar{k} \in K^q$. It follows that for all $s \in \Lambda^k(d^k)$ we have $\psi^q_l(s) - \psi^q_k(s) \leq  (\lambda^q_k -\lambda^q_l)^\top s$.	Note that $\psi^q_k$ and $\psi^q_l$ are $B^k$-periodic and $B^l$-periodic, respectively. By Lemma~4.8 in \cite{romeijnders2016general} this implies that they are $p_k I_m$-periodic and $p_l I_m$-periodic, respectively, where $p_k := | \det B^k |$ and $p_l := | \det B^l |$. Note that $p_k$ and $p_l$ are integers by our assumption that $W$ is an integer matrix. It follows that $\psi^q_l - \psi^q_k$ is a $p_{kl} I_m$-periodic function, where $p_{kl} := p_k \cdot p_l$. Now, let $C_{kl} \subseteq \Lambda^k(d^k)$ be a hypercube of length $p_{kl}$. Then, integrating $\psi^q_l - \psi^q_k$ over $C_{kl}$ and dividing by its volume $(p_{kl})^m$, we obtain 
	\begin{align*}
    	\Gamma^q_l - \Gamma^q_k &= p_{kl}^{-m} \int_{C_{kl}} (\psi^q_l(s) - \psi^q_k(s)) ds \\
    	&\leq p_{kl}^{-m} \int_{C_{kl}} (\lambda^q_k - \lambda^q_l)^\top s ds =: \tilde{G}^q_{kl}.
	\end{align*}
	We will derive an upper bound $G^q_{kl}$ on the right-hand side $\tilde{G}^q_{kl}$. Using the change of variables $s = B^k t$, we can write
	\begin{align*}
    	\tilde{G}^q_{kl} &= p_{kl}^{-m} | \det B^k | \int_{\bar{C}_{kl}} (\lambda^q_k - \lambda^q_l)^\top B^k t dt \\
    	&= p_{kl}^{-m} | \det B^k | \int_{\bar{C}_{kl}} (\bar{q}^{kl})^\top t dt,
	\end{align*}
	where $\bar{C}_{kl} := \{ t \in \R^m_+ \ | \ B^k t \in C_{kl} \}$ and $\bar{q}^{kl}$ is as in Lemma~\ref{lemma:lambda_diff_B}. We claim that $\bar{q}^{kl}_i \geq 0$ for every $i =1,\ldots,m$. If $B^k_i \in B^l$, this follows immediately from Lemma~\ref{lemma:lambda_diff_B}. If $B^k_i \notin B^l$, then by Lemma~\ref{lemma:lambda_diff_B}, $\bar{q}^{kl}_i$ equals $\bar{q}_{N^l_{j(i)}}$, the reduced cost of the variable corresponding to $B^k_i$ with respect to the basis matrix $B^l$. Since $B^l$ is an dual feasible basis matrix, the reduced cost $\bar{q}_{N^l_{j(i)}}$ is non-negative. Hence, indeed $\bar{q}^{kl} \geq 0$. Define the vector $t^{kl}$ with elements $t^{kl}_i := \max\{ t_i \ | \ t \in \bar{C}_{kl} \}$, $i=1,\ldots,m$. Then, it follows that
	\begin{align*}
    	\tilde{G}^q_{kl} &\leq p_{kl}^{-m} | \det B^k | \int_{\bar{C}_{kl}} (\bar{q}^{kl})^\top t^{kl} dt \\
    	&= p_{kl}^{-m} \int_{C_{kl}} (\bar{q}^{kl})^\top t^{kl} ds = (\bar{q}^{kl})^\top t^{kl} = G^q_{kl}.
	\end{align*}	
	We conclude that $\Gamma^q_l - \Gamma^q_k \leq \tilde{G}^q_{kl} \leq G^q_{kl}$.
\end{proof}

By Lemma~\ref{lemma:G}, $(\lambda^q_k - \lambda^q_l)^\top s \geq G^q_{kl}$ is a sufficient condition for \eqref{eq:lambda_Gamma_inequality}. We use this fact to derive an vector $\bar{\sigma}^k$ for which \eqref{eq:v_approx_error_periodic} holds.

\begin{lemma} \label{lemma:sigma_bar_k}
	Let $k \in \bar{K}$ be given. Then, there exists $\bar{\sigma}^k \in \Lambda^k(d^k)$, such that for all $q \in \Xi^q$ with $k \in K^q$ and for all $s \in \bar{\sigma}^k + \Lambda^k$, we have
	\begin{align}
		v^q(s) - \hat{v}^q(s) = \psi^q_k(s) - \Gamma^q_k, \label{eq:v_approx_error_periodic_lemma}
	\end{align}
\end{lemma}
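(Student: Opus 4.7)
The plan is to construct $\bar{\sigma}^k$ with two properties: (a) $\bar{\sigma}^k + \Lambda^k \subseteq \Lambda^k(d^k)$, so that Lemma~\ref{lemma:psi} applies and gives $v^q(s) = (\lambda^q_k)^\top s + \psi^q_k(s)$ on $\bar{\sigma}^k + \Lambda^k$ for every $q \in \Xi^q$ with $k \in K^q$; and (b) on $\bar{\sigma}^k + \Lambda^k$, the index $k$ attains the maximum in the definition of $\hat{v}^q$, so that $\hat{v}^q(s) = (\lambda^q_k)^\top s + \Gamma^q_k$. Subtracting the two identities then yields \eqref{eq:v_approx_error_periodic_lemma}.

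To realize (b), I would reduce to the sufficient condition provided by Lemma~\ref{lemma:G}: for each $l \in K^q$, $(\lambda^q_k - \lambda^q_l)^\top s \geq G^q_{kl} = (\bar{q}^{kl})^\top t^{kl}$. Writing $\bar{\sigma}^k = B^k \tau^k$ with $\tau^k \in \R^m_+$ and parameterizing $s = \bar{\sigma}^k + B^k u$, $u \in \R^m_+$, Lemma~\ref{lemma:lambda_diff_B} yields $(\lambda^q_k - \lambda^q_l)^\top s = (\bar{q}^{kl})^\top (\tau^k + u)$. Since the nonnegativity $\bar{q}^{kl} \geq 0$ was established in the proof of Lemma~\ref{lemma:G}, and $u \geq 0$, it suffices to ensure $(\bar{q}^{kl})^\top \tau^k \geq (\bar{q}^{kl})^\top t^{kl}$; using $\bar{q}^{kl} \geq 0$ once more, this holds whenever $\tau^k \geq t^{kl}$ componentwise. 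Because $\bar{K}$ is finite, I would set $\tau^k_i := \max_{l \in \bar{K}} t^{kl}_i$ for $i = 1, \ldots, m$, which is independent of $q$.

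To additionally enforce (a), I would enlarge $\tau^k$ further. For any $s \in \bar{\sigma}^k + \Lambda^k$ we have $(B^k)^{-1} s \geq \tau^k$ componentwise, and $\mathcal{B}(s, d^k) \subseteq \Lambda^k$ holds whenever $((B^k)^{-1} s)_i \geq \| e_i^\top (B^k)^{-1} \|_2 \, d^k$ for each $i$. Hence replacing $\tau^k_i$ by $\max\{ \tau^k_i, \| e_i^\top (B^k)^{-1} \|_2 \, d^k \}$ and setting $\bar{\sigma}^k := B^k \tau^k$ produces a vector with both properties, depending only on $W$ and the basis matrices, not on $q$.

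The main obstacle is obtaining uniformity in $q$: both the dual vertex differences $\lambda^q_k - \lambda^q_l$ and the mean-value gaps $\Gamma^q_l - \Gamma^q_k$ vary with $q$, so it is not a priori clear that a single vector $\bar{\sigma}^k$ can serve every admissible $q$. The key structural fact that unlocks this is the sign property $\bar{q}^{kl} \geq 0$: it allows componentwise dominance $\tau^k \geq t^{kl}$ to translate into the required scalar inequality regardless of $q$, thereby decoupling the construction of $\bar{\sigma}^k$ from the value of $q$.
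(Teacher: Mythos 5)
Your proposal is correct and follows essentially the same route as the paper: reduce to the sufficient condition $(\lambda^q_k-\lambda^q_l)^\top s \geq G^q_{kl}$ from Lemma~\ref{lemma:G}, use Lemma~\ref{lemma:lambda_diff_B} and the sign property $\bar{q}^{kl}\geq 0$ to turn this into componentwise dominance $t \geq t^{kl}$, and then take the (finite) maximum over $l \in \bar{K}$ to obtain a $q$-independent shift. Your explicit choice $\tau^k_i = \max_l t^{kl}_i$ together with the extra enlargement ensuring $\bar{\sigma}^k + \Lambda^k \subseteq \Lambda^k(d^k)$ is just a more concrete rendering of the cone intersection $\bigcap_l \Lambda^{kl}$ that the paper invokes.
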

\begin{proof}
	Let $s \in \Lambda^k$ be given. Then, there exists some $t \in \R^m_+$ such that $s = B^k t$. Hence, for any $l \in \bar{K}$ with $l \neq k$ and any $q \in \Xi^q$ with $k,l \in K^q$, we can write
	\begin{align}
    	&(\lambda^q_k -\lambda^q_l)^\top s \geq G^q_{kl} \quad \iff \quad (\lambda^q_k -\lambda^q_l)^\top B^k t \geq G^q_{kl}\nonumber \\
    	&\qquad\qquad \iff \quad (\bar{q}^{kl})^\top t \geq (\bar{q}^{kl})^\top t^{kl}. \label{eq:ineq_lambda_G}
	\end{align}	
	Since $\bar{q}^{kl} \geq 0$ by the proof of Lemma~\ref{lemma:G}, a sufficient condition for \eqref{eq:ineq_lambda_G} is $t \geq t^{kl}$, which is equivalent to $s \in \Lambda^{kl} := \{ B^k t \ | \ t \geq t^{kl} \}$. Now, similar as in \cite{romeijnders2016general} it can be shown that the intersection $\bar{\Lambda}^k := \bigcap_{K^q : q \in \Xi^q} \bigcap_{l \in K^q : l \neq k} \Lambda^{kl}$ can be represented as $\bar{\sigma}^k + \Lambda^k$, for some $\bar{\sigma}^k \in \Lambda^k$. Note that here, the first intersection is over a \textit{finite} collection of index sets $K^q$, $q \in \Xi^q$, since $K^q \subseteq \bar{K}$ for every $q \in \Xi^q$ and $\bar{K}$ is a finite set. By construction of $\bar{\sigma}^k$ and $t^{kl}$, we have $\bar{\sigma}^k \in \Lambda^k(d^k)$. It then follows from the discussion at the start of this subsection that indeed, \eqref{eq:v_approx_error_periodic_lemma} holds if $s \in \bar{\sigma}^k + \Lambda^k$. 
\end{proof}

\subsection{Uniform upper bound} \label{subsec:value_function_approx_bound}
Next, we derive a uniform upper bound on the value function approximation error $\|\hat{v}^q - v^q\|_\infty$, whose dependence on $q$ is expressed explicitly. In particular, we derive a bound of the form $\|\hat{v}^q - v^q\|_\infty \leq \gamma \| q \|_1$, for some $\gamma > 0$. To derive such an upper bound, we split up the approximation error by the inequality
\begin{align}
    \| v^q - \hat{v}^q \|_\infty &\leq \| v^q - v_{\text{LP}}^q \|_\infty + \| v_{\text{LP}}^q - \hat{v}^q \|_\infty, \label{eq:v_v_hat_error_split}
\end{align}
and we bound each of the terms in the right-hand side above separately.

\begin{lemma}\label{lemma:bound_v_v_LP}
    There exists a finite constant $\gamma_1 > 0$, such that for every $q \in \Xi^q$,
    \begin{align*}
        \| v^q - v_{\text{LP}}^q \|_\infty \leq \gamma_1 \| q \|_1.
    \end{align*}
\end{lemma}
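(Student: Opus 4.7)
My plan is to reduce the claim to a uniform integer-rounding result. Since $v^q \geq v^q_{\text{LP}}$ holds pointwise by LP-relaxation, it suffices to exhibit, for every $s \in \R^m$, an integer-feasible $\hat{y}(s) \in Y$ with $W \hat{y}(s) = s$ that is $\ell_\infty$-close to an LP-optimal basic feasible solution $y^*(s)$: concretely, $\| \hat{y}(s) - y^*(s) \|_\infty \leq \bar{M}$ for some constant $\bar{M} > 0$ depending only on $W$ and on the dimensions of $Y$. Given such a pair, H\"older's inequality yields
\begin{align*}
    v^q(s) - v^q_{\text{LP}}(s) \leq q^\top \big(\hat{y}(s) - y^*(s)\big) \leq \|q\|_1 \|\hat{y}(s) - y^*(s)\|_\infty \leq \bar{M}\, \|q\|_1,
\end{align*}
so the claim follows with $\gamma_1 := \bar{M}$.

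To construct $\hat{y}(s)$ I would exploit the cone partition from Lemma~\ref{lemma:psi}. Pick $k \in K^q$ with $s \in \Lambda^k$, so that the LP basic feasible solution at $s$ is $y^*_{B^k} = (B^k)^{-1} s =: t \geq 0$ and $y^*_{N^k} = 0$. Define $\hat{t}$ by rounding the integer-type basic positions of $t$ up to the next integer while leaving the continuous-type basic positions unchanged, and set $\epsilon := \hat{t} - t \in [0,1)^m$. The candidate $(\hat{t}, 0) \in Y$ satisfies $W(\hat{t},0) = s + B^k \epsilon$, so I would add a correction vector $z \in Y$ with $Wz = -B^k \epsilon$, whose existence is guaranteed by Assumption~\ref{ass:main}\eqref{ass:complete_suff_exp_recourse}. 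Then $\hat{y} := (\hat{t},0) + z$ lies in $Y$ (by closure of $\Z_+^{n_2} \times \R_+^{\bar{n}_2}$ under componentwise addition), satisfies $W\hat{y} = s$, and obeys $\|\hat{y} - y^*\|_\infty \leq 1 + \|z\|_\infty$.

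The main obstacle will be obtaining a uniform bound on $\|z\|_\infty$. The right-hand sides $-B^k \epsilon$ range over the set $U := \bigcup_{k \in \bar{K}} \{-B^k \epsilon : \epsilon \in [0,1]^m\}$, which is compact because $\bar{K}$ is finite. I would then argue that the MILP feasibility value function $g(u) := \min\{\|z\|_\infty : z \in Y,\, Wz = u\}$ is bounded on the compact set $U$, using structural properties of MILP value functions for integer $W$ (decomposing $g$ into a piecewise-linear LP part and a periodic integer correction, each bounded on compacta). Taking $\bar{M} := 1 + \sup_{u \in U} g(u)$ then closes the proof. This step is delicate because complete recourse alone yields only pointwise existence of $z$, and $g$ is generally only lower semicontinuous, so a direct compactness argument does not suffice; a fully self-contained alternative would be to construct $z$ via a Hermite normal form decomposition of $W$, expressing each $u \in U$ as an integer combination of a fixed set of kernel generators plus a bounded remainder. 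In either approach, the integrality of $W$ (Assumption~\ref{ass:main}\eqref{ass:integer_W}) and the finiteness of $\bar{K}$ are indispensable.
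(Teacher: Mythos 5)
Your overall reduction is sound: since $v^q \ge v^q_{\text{LP}}$ pointwise, exhibiting for each $s$ a mixed-integer feasible $\hat y(s)$ with $W\hat y(s)=s$ and $\|\hat y(s)-y^*(s)\|_\infty \le \bar M$ for a constant $\bar M$ depending only on $W$ immediately gives the lemma via H\"older with $\gamma_1=\bar M$. This is in fact exactly the structure of the result the paper invokes: its entire proof is a citation of Corollary~2 and Remark~1 of the sensitivity paper of Cook, Gerards, Schrijver, and Tardos \cite{cook1986}, whose content is precisely the proximity statement ``there is an optimal mixed-integer solution within $\ell_\infty$-distance $n\Delta$ of an optimal LP vertex, where $\Delta$ bounds the subdeterminants of $W$.'' So you have not found a different theorem; you have reformulated the cited one and set out to reprove it.

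The genuine gap is the step you yourself flag as delicate: the uniform bound $\sup_{u\in U} g(u)<\infty$ on the correction vector $z$. This is not a technicality that can be patched around the rest of your argument --- it \emph{is} the theorem. Assumption~\ref{ass:main}\eqref{ass:complete_suff_exp_recourse} gives only pointwise existence of some $z\in Y$ with $Wz=u$, with no control on its size, and as you note $g$ is merely lower semicontinuous, so compactness of $U$ buys nothing. Your first repair (``structural properties of MILP value functions'') is an appeal to Blair--Jeroslow/Gomory-function theory that is at least as deep as the cited proximity theorem and is not carried out. Your second repair via the Hermite normal form of $W$ does not work as stated: the right-hand sides $-B^k\epsilon$ are generally not integral (only $W$ and $B^k$ are integer; $\epsilon\in[0,1)^m$ is real), and, more fundamentally, HNF produces integer solutions of $Wz=u$ \emph{without} sign constraints, whereas you need $z\in Y=\Z_+^{n_2}\times\R_+^{\bar n_2}$. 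Obtaining a \emph{nonnegative} mixed-integer solution of uniformly bounded norm is exactly where complete recourse must be used quantitatively (e.g., via a conformal/circuit decomposition of $\hat y - y^*$ as in \cite{cook1986}), and that argument is absent. Either supply that decomposition argument in full or, as the paper does, cite the result directly.
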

\begin{proof}
    See Corollary~2 in \cite{cook1986} and Remark~1 in the same paper.
\end{proof}

\begin{lemma}\label{lemma:bound_v_LP_v_hat}
    There exists a finite constant $\gamma_2 > 0$, such that for every $q \in\Xi^q$,
    \begin{align*}
        \| v_{\text{LP}}^q - \hat{v}^q \|_\infty \leq \gamma_2 \| q \|_1
    \end{align*}
\end{lemma}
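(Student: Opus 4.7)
The plan is to reduce the bound to the magnitudes of the mean values $\Gamma^q_k$, and then to bound each $|\Gamma^q_k|$ by $\|q\|_1$ (up to a constant) using Lemma~\ref{lemma:bound_v_v_LP}.

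First, I would observe that for any $s \in \R^m$, the inequality
\[
\max_{k \in K^q} \{ a_k + b_k \} \le \max_{k \in K^q} a_k + \max_{k \in K^q} b_k,
\]
together with the analogous lower bound $\max_k\{a_k+b_k\} \ge \max_k a_k + \min_k b_k$, applied with $a_k = (\lambda^q_k)^\top s$ and $b_k = \Gamma^q_k$, yields the pointwise estimate
\[
| \hat v^q(s) - v^q_{\text{LP}}(s) | \le \max_{k \in K^q} | \Gamma^q_k |.
\]
The right-hand side does not depend on $s$, so it is also an upper bound on $\| v^q_{\text{LP}} - \hat v^q \|_\infty$. Hence it suffices to show that $|\Gamma^q_k|$ is bounded by a constant multiple of $\|q\|_1$, uniformly in $k \in \bar{K}$.

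To control $|\Gamma^q_k|$, I would use that by Lemma~\ref{lemma:psi}, $\psi^q_k(s) = v^q(s) - (\lambda^q_k)^\top s$ on the shifted cone $\Lambda^k(d^k)$, and that on $\Lambda^k \supseteq \Lambda^k(d^k)$ the LP-dual vertex $\lambda^q_k$ is optimal, so $(\lambda^q_k)^\top s = v^q_{\text{LP}}(s)$ there. Therefore
\[
\psi^q_k(s) = v^q(s) - v^q_{\text{LP}}(s), \qquad s \in \Lambda^k(d^k),
\]
and Lemma~\ref{lemma:bound_v_v_LP} immediately gives $|\psi^q_k(s)| \le \gamma_1 \|q\|_1$ on $\Lambda^k(d^k)$. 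Now, since $\psi^q_k$ is $p_k I_m$-periodic and $\Lambda^k(d^k)$ is a full-dimensional unbounded translate of the cone $\Lambda^k$, I can choose a hypercube $C_k \subseteq \Lambda^k(d^k)$ of side length $p_k$ (this choice depends only on $k$, not on $q$) and compute
\[
|\Gamma^q_k| = \Bigl| p_k^{-m} \int_{C_k} \psi^q_k(s)\, ds \Bigr| \le \gamma_1 \|q\|_1.
\]
Combining with the first step yields $\| v^q_{\text{LP}} - \hat v^q \|_\infty \le \gamma_1 \|q\|_1$, so the claim holds with $\gamma_2 := \gamma_1$.

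The only mildly delicate point is step two: verifying that a hypercube of side length $p_k$ fits inside $\Lambda^k(d^k)$, so that the mean of the $p_k I_m$-periodic function $\psi^q_k$ can be evaluated on a region where the identity $\psi^q_k = v^q - v^q_{\text{LP}}$ is valid. This follows because $B^k$ is invertible, making $\Lambda^k$ a full-dimensional polyhedral cone and $\Lambda^k(d^k)$ a full-dimensional unbounded set. Everything else is a routine uniform bound, and crucially, the choice of $C_k$ and the constant $\gamma_1$ are independent of $q$, which is exactly what is needed for the bound to be parametric in $q$.
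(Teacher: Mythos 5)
Your proof is correct, and the key step differs from the paper's. Both arguments begin with the same reduction $\| v^q_{\text{LP}} - \hat v^q \|_\infty \le \max_{k} |\Gamma^q_k|$ (the paper writes $\max_k \Gamma^q_k$, implicitly using $\psi^q_k \ge 0$; your two-sided max inequality is slightly more careful but equivalent). Where you diverge is in bounding $\Gamma^q_k$: the paper works from the explicit Gomory-relaxation representation $\psi^q_k(s) = \bar q_{N^k}^\top y^*_{N^k}(s)$ with $y^*_{N^k} \in [0,p_k]^m$, rewrites the reduced cost as $q^\top M^k$, and extracts a constant $\gamma_2 = \max_{k \in \bar K} |p^k| \, \| M^k \iota_{\bar n} \|_\infty$; you instead observe that on $\Lambda^k(d^k)$ the identity $\psi^q_k = v^q - v^q_{\text{LP}}$ holds, invoke Lemma~\ref{lemma:bound_v_v_LP} to get $|\psi^q_k| \le \gamma_1 \|q\|_1$ there, and use $p_k I_m$-periodicity to evaluate the mean $\Gamma^q_k$ over a hypercube of side $p_k$ contained in $\Lambda^k(d^k)$ (such a hypercube exists since $\Lambda^k(d^k)$ contains arbitrarily large balls --- the same fact the paper uses in the proof of Lemma~\ref{lemma:G}). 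Your route is more economical: it reuses the already-available Cook et al.\ bound rather than re-deriving structure from the Gomory relaxation, and it yields the cleaner constant $\gamma_2 = \gamma_1$. The paper's route is more self-contained with respect to the combinatorial structure of $W$ and produces a constant expressed directly in terms of the basis matrices, which may be of independent interest, but for the stated lemma your argument is complete. One minor remark: your claim that the hypercube $C_k$ can be chosen independently of $q$ is not actually needed (and, since $d^k$ is introduced in Lemma~\ref{lemma:psi} for a given $q$, not obviously immediate), because by periodicity the value of $\Gamma^q_k$ does not depend on which period cell you integrate over, so the final bound $\gamma_1\|q\|_1$ is $q$-free regardless.
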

\begin{proof}
    Comparing the dual formulation of $v_{\text{LP}}^q(s)$ with the definition of $\hat{v}^q(s)$, it is clear that $\|v_{\text{LP}}^q - \hat{v}^q \|_\infty \leq \max_{k \in K^q} \Gamma_k^q$. Recall that $\Gamma_k^q$ is the mean value of the $B^k$-periodic function $\psi_k^q$. By the proof of Theorem~2.9 in \cite{romeijnders2016general}, we can write $\psi^q_k(s) = \bar{q}_{N^k}^\top y^*_{N^k}$, where $\bar{q}_{N^k}^\top := q_{N^k}^\top - q_{B^k}^\top(B^k)^{-1} N^k \geq 0$, and $y^*_{N^k} \in [0, p_k]^m$ is optimal in the Gomory relaxation $v_{B^k}(s)$. Note that $\bar{q}_{N^k}^\top = [q_{N^k}^\top \ q_{B^k}^\top] \begin{bmatrix} I_{\bar{n}} \\ -(B^k)^{-1} N^k \end{bmatrix}$, where $\bar{n} := n_2 + \bar{n}_2 - m$. Hence, there exists a matrix $M^k$ (whose columns are a permutation of the columns of the matrix above) such that we can write $\bar{q}_{N^k}^\top = q^\top M^k$. It follows that
    \begin{align*}
        \Gamma_k^q &\leq \sup_{s \in \R^m} \psi^q_k(s) \leq \sup_{s \in \R^m} \bar{q}_{N^k}^\top y^*_{N^k}(s) \\
        &= \sup_{s \in \R^m} q^\top M^k y^*_{N^k}(s) \leq q^\top \big( p^k \cdot M^k \iota_{\bar{n}}\big),
    \end{align*}
    where $\iota_{\bar{n}} := (1,\ldots,1) \in \R^{\bar{n}}$. Hence, we obtain
    \begin{align*}
        &\|v_{\text{LP}}^q - \hat{v}^q \|_\infty \leq \max_{k \in K^q} \Gamma_k^q \leq \max_{k \in K^q} q^\top \big( p^k \cdot M^k \iota_{\bar{n}}\big) \\
        &\leq \max_{k \in \bar{K}} \left\{ p^k  q^\top M^k \iota_{\bar{n}}\right\} \leq \max_{k \in \bar{K}} \left\{ | p^k| \cdot \| q \|_1 \cdot \| M^k \iota_{\bar{n}} \|_\infty \right\}\\
        &= \max_{k \in \bar{K}} \left\{ | p^k| \cdot \| M^k \iota_{\bar{n}} \|_\infty \right\} \cdot \| q \|_1.
    \end{align*}
    Defining $\gamma_2 := \max_{k \in \bar{K}} \left\{ | p^k| \cdot \| M^k \iota_{\bar{n}} \|_\infty \right\}$, it follows that $\|v_{\text{LP}}^q - \hat{v}^q \|_\infty \leq \gamma_2 \| q \|_1$. Since $\bar{K}$ is a finite index set, $\gamma_2$ is indeed finite.
\end{proof}

Combining Lemma~\ref{lemma:bound_v_v_LP} and \ref{lemma:bound_v_LP_v_hat} yields desired upper bound on the approximation error $\| v^q - \hat{v}^q \|_\infty$.

\begin{lemma} \label{lemma:bound_v_v_hat}
    There exists a finite constant $\gamma > 0$ such that for all $q \in \Xi^q$,
    \begin{align*}
        \| v^q - \hat{v}^q \|_\infty &\leq \gamma \| q \|_1.
    \end{align*}
\end{lemma}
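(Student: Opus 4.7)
The plan is to combine Lemma~\ref{lemma:bound_v_v_LP} and Lemma~\ref{lemma:bound_v_LP_v_hat} through the triangle-inequality splitting already recorded in \eqref{eq:v_v_hat_error_split}. Concretely, for any fixed $q \in \Xi^q$ and any $s \in \R^m$, the pointwise triangle inequality
\begin{align*}
    | v^q(s) - \hat{v}^q(s) | \leq | v^q(s) - v_{\text{LP}}^q(s) | + | v_{\text{LP}}^q(s) - \hat{v}^q(s) |
\end{align*}
holds, and taking the supremum over $s$ yields the inequality \eqref{eq:v_v_hat_error_split}.

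Next, I would directly invoke the two preceding lemmas on the right-hand side: Lemma~\ref{lemma:bound_v_v_LP} bounds the first term by $\gamma_1 \| q \|_1$, and Lemma~\ref{lemma:bound_v_LP_v_hat} bounds the second by $\gamma_2 \| q \|_1$. Adding these gives
\begin{align*}
    \| v^q - \hat{v}^q \|_\infty \leq (\gamma_1 + \gamma_2) \| q \|_1.
\end{align*}
Setting $\gamma := \gamma_1 + \gamma_2$ completes the argument; finiteness of $\gamma$ follows from the finiteness of $\gamma_1$ and $\gamma_2$, which is guaranteed by the preceding lemmas.

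There is no real obstacle here, since the substantive content has already been absorbed into Lemmas~\ref{lemma:bound_v_v_LP} and \ref{lemma:bound_v_LP_v_hat}. The only point that deserves a brief comment in the write-up is that $\gamma_1$ and $\gamma_2$ do \emph{not} depend on $q$, so the resulting $\gamma$ is a single universal constant valid uniformly across all $q \in \Xi^q$. This uniformity is precisely what makes the bound useful later in Section~\ref{sec:error_bounds}, where it must be integrated against the distribution of $q$.
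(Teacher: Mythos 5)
Your proof is correct and follows exactly the same route as the paper: the triangle-inequality split in \eqref{eq:v_v_hat_error_split} combined with Lemma~\ref{lemma:bound_v_v_LP} and Lemma~\ref{lemma:bound_v_LP_v_hat}, giving $\gamma = \gamma_1 + \gamma_2$. Your remark about the uniformity of $\gamma$ across $q \in \Xi^q$ is accurate and indeed the point that matters for Section~\ref{sec:error_bounds}.
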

\begin{proof}
    Follows immediately from \eqref{eq:v_v_hat_error_split}, Lemma~\ref{lemma:bound_v_v_LP}, and Lemma~\ref{lemma:bound_v_LP_v_hat}.
\end{proof}

\section{Parametric error bounds} \label{sec:error_bounds}

In this section we derive parametric error bounds for the convex approximations of MIR models studied in this paper. As discussed in Section~\ref{sec:problem_definition}, we first derive an alternative to Lemma~\ref{lemma:error_bound_literature} for which the dependence of the corresponding constant $C^q$ on $q$ is explicit. In our discussion we will focus on the shifted LP-relaxation approximation. The analysis for the $\alpha$-approximation is analogous.

Consider the shifted LP-relaxation approximation $\hat{Q}$ from Definition~\ref{def:shifted_LP-relaxation} and suppose that $q$ and $T$ are fixed. To derive our alternative to Lemma~\ref{lemma:error_bound_literature}, we make use of a similar line of reasoning as used to prove Lemma~\ref{lemma:error_bound_literature} in its original source \cite{romeijnders2016general}. The main differences are twofold. Firstly, we will use the asymptotic periodicity result from Lemma~\ref{lemma:sigma_bar_k} (in which the vectors $\bar{\sigma}^k$, $k \in K^q$ do not depend on $q$) rather than the analogue result from \cite{romeijnders2016general} (in which the corresponding vectors $\sigma^k$, $k \in K^q$ do depend on $q$). Secondly, we use the upper bound on $\| v^q - \hat{v}^q \|_\infty$ from Lemma~\ref{lemma:bound_v_v_hat}, whose dependence on $q$ is expressed explicitly, rather than the analogous result from \cite{romeijnders2016general}.

\begin{lemma} \label{lemma:bound_given_q}
    Consider the recourse function $Q$ and its shifted LP-relaxation approximation $\hat{Q}$ from Definition~\ref{def:shifted_LP-relaxation} and assume that $q \in \Xi^q$ and $T \in \Xi^T$ are fixed. Then, there exists a finite constant $C > 0$, not depending on $T$ and $q$, such that for all $f \in \mathcal{H}^m$, we have
    \begin{align*}
        \| Q - \hat{Q} \|_\infty &\leq C \cdot \| q \|_1 \cdot \sum_{i=1}^m \E\big[ \totvar f_i(\cdot|h_{-i}) \big].    
    \end{align*}
\end{lemma}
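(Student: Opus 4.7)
The plan is to adapt the proof of Theorem~5.1 in \cite{romeijnders2016general} with the two substitutions announced just before the lemma statement: replace the $q$-dependent shifts of \cite{romeijnders2016general} by the $q$-independent shifts $\bar{\sigma}^k$ from Lemma~\ref{lemma:sigma_bar_k}, and use the explicit linear-in-$\|q\|_1$ sup-norm bound from Lemma~\ref{lemma:bound_v_v_hat} where the original proof relies on an unspecified $q$-dependent constant. After fixing an arbitrary $x \in \R^{n_1}$, I would write
$$Q(x) - \hat{Q}(x) = \int_{\R^m} \bigl[v^q(h - Tx) - \hat{v}^q(h - Tx)\bigr] f(h) dh,$$
and split the domain, via the substitution $s = h - Tx$, into the shifted cones $\bar{\sigma}^k + \Lambda^k$ for $k \in K^q$, together with a transition region $R$ that is a finite union of slabs along the cone boundaries. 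By Lemma~\ref{lemma:sigma_bar_k} the cones $\bar{\sigma}^k + \Lambda^k$, and hence $R$, are determined solely by $W$, independent of $q$, $T$, and $x$.

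On each shifted cone the integrand equals the $B^k$-periodic, zero-mean function $\psi^q_k - \Gamma^q_k$ (Lemma~\ref{lemma:sigma_bar_k}). I would then invoke the periodic-integration machinery from Section~4 of \cite{romeijnders2016general} to bound its contribution to the integral by
$$C_1 \cdot \|\psi^q_k - \Gamma^q_k\|_\infty \cdot \sum_{i=1}^m \E\big[\totvar f_i(\cdot \mid h_{-i})\big],$$
with $C_1$ depending only on $W$. The crucial new input is that $\|\psi^q_k - \Gamma^q_k\|_\infty \leq 2 \|v^q - \hat{v}^q\|_\infty \leq 2\gamma \|q\|_1$ by Lemma~\ref{lemma:bound_v_v_hat}, with $\gamma$ independent of $q$, so this contribution is linear in $\|q\|_1$. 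For the transition region I would apply the uniform bound $|v^q(s) - \hat{v}^q(s)| \leq \gamma \|q\|_1$, reducing the task to showing that $\int_R f(h) dh \leq C_2 \sum_{i=1}^m \E\big[\totvar f_i(\cdot \mid h_{-i})\big]$. This follows from a routine one-dimensional argument since each slab in $R$ has $W$-determined bounded width in some coordinate direction, and the supremum of a conditional density on a bounded interval is dominated by its total variation.

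Summing the cone-wise estimates over the finite set $\bar{K} \supseteq K^q$ and combining with the transition-region estimate gives the lemma with $C$ equal to an explicit combination of $\gamma$, $C_1$, $C_2$, and the cardinality of $\bar{K}$, all independent of $q$ and $T$. I expect the main obstacle to be the careful verification that the transition-region geometry really is $q$-independent, so that $C_2$ can be chosen uniformly: this requires tracing through the construction of $\bar{\sigma}^k$ in the proof of Lemma~\ref{lemma:sigma_bar_k}, where the intersection $\bigcap_{l \neq k} \Lambda^{kl}$ is built from the vectors $t^{kl}$ of Lemma~\ref{lemma:G}, which are themselves defined in terms of $W$ only. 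Once this is established, the slab widths are controlled by $W$ alone, and the total-variation reduction for $\int_R f(h) dh$ goes through unchanged.
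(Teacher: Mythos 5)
Your proposal is correct and follows essentially the same route as the paper's proof: the same decomposition into the $q$-independent shifted cones $\bar{\sigma}^k + \Lambda^k$ plus their complement, the same use of the zero-mean periodic-integration bound (Theorem~4.13 of \cite{romeijnders2016general}) on the cones and of the transition-region probability bound (the analogue of equation~(5.4) there) on the complement, with Lemma~\ref{lemma:bound_v_v_hat} supplying the $\gamma\|q\|_1$ sup-norm control in both places and the sum taken over the finite set $\bar{K}$. The only cosmetic difference is your superfluous factor of $2$ in bounding $\|\psi^q_k - \Gamma^q_k\|_\infty$; on the shifted cone this quantity equals $|v^q - \hat{v}^q|$, so $\gamma\|q\|_1$ already suffices.
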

\begin{proof}
    Let $x \in \R^{n_1}$ be given. Then, we have
    \begin{align*}
       \big| Q(x) - \hat{Q}(x) \big| &= \big| \int_{\R^m}  \big( v(\omega) - \hat{v}(\omega) \big) g(\omega) d \omega \big| ,
    \end{align*}
    where $g$ is the pdf satisfying $g(\omega) = f(\omega + Tx)$, $\omega \in \R^m$. We will split up this integral into several integrals over subsets of its domain $\R^m$. By Lemma~\ref{lemma:sigma_bar_k} we know that the value function approximation error $v^q - \hat{v}^q$ is $B^k$-periodic on $\bar{\sigma}^k + \Lambda^k$, $k \in K^q$. Writing $\mathcal{N} := \R^m \setminus \cup_{k \in K}( \bar{\sigma}_k + \Lambda^k)$ for the complement set, we have
    \begin{align}
        &\big| \E^h[v^q(h - Tx) - \hat{v}^q(h - Tx)] \big| \\
        &\leq \sum_{k \in K^q} \Big| \int\limits_{\bar{\sigma}_k + \Lambda^k}  \big( v^q(\omega) - \hat{v}^q(\omega) \big) g(\omega) d \omega \Big| \label{eq:term_sigma}\\
        &+  \int_{\mathcal{N}}|  v^q(\omega) - \hat{v}^q(\omega) | g(\omega) d\omega. \label{eq:term_N}
    \end{align}
    Observe that, importantly, the sets $\bar{\sigma}^k + \Lambda^k$, $k \in K^q$, and $\mathcal{N}$ do not depend on $q$ (as opposed to their analogues in \cite{romeijnders2016general}). 
    
    Now we bound both terms in the right-hand side above separately. Applying Theorem~4.13 from \cite{romeijnders2016general}, providing an upper bound on the integral of a zero-mean $B^k$-periodic function, and using the upper bound on $\| v^q - \hat{v}^q \|_\infty$ from Lemma~\ref{lemma:bound_v_v_hat}, we obtain 
    \begin{align}
        &\left| \int_{\bar{\sigma}^k + \Lambda^k} \big(  v^q(\omega) - \hat{v}^q(\omega) \big) g(\omega) d \omega \right| \nonumber\\
        &\leq \frac{1}{2} \gamma \| q \|_1  |\det(B^k)| \sum_{i=1}^m \E\big[ \totvar f_i(\cdot|h_{-i}) \big]. \label{eq:bound_first_term}
    \end{align}
    Summing over all $k \in K^q$ then yields an upper bound on \eqref{eq:term_sigma}. However, to avoid complications related to the dependence of the index set $K^q$ on $q$, we instead sum over all $k \in \bar{K}$. This yields an upper bound that only depends on $q$ through $\| q \|_1$.
    
    For \eqref{eq:term_N}, we observe that
    \begin{align*}
        \int_{\omega \in \mathcal{N}}|  v^q(\omega) - \hat{v}^q(\omega) | g(\omega) d\omega \leq \| v^q - \hat{v}^q \|_\infty \Prob\{ \omega \in \mathcal{N} \}.
    \end{align*}
    We use Lemma~\ref{lemma:bound_v_v_hat} to bound $\| v^q - \hat{v}^q \|$. Moreover, we use an analogue of equation (5.4) in \cite{romeijnders2016general} to bound the probability $\Prob\{ \omega \in \mathcal{N} \}$. However, rather than summing over $k \in K^q$, as in \cite{romeijnders2016general}, we again sum over $k \in \bar{K}$, yielding a more conservative upper bound. We obtain
    \begin{align}
        &\int_{\omega \in \mathcal{N}}|  v^q(\omega) - \hat{v}^q(\omega) | g(\omega) d\omega \nonumber \\
        &\leq \gamma \| q \|_1 \sum_{k \in \bar{K}} \sum_{j = 1}^m D_{kj} \sum_{i=1}^m \E\big[ \totvar f_i(\cdot|h_{-i}) \big]. \label{eq:bound_second_term}
    \end{align}
    Note that the constants $D_{kj}$, $k \in \bar{K}$, $j =1,\ldots,m$, do not depend on $q$, since $\mathcal{N}$ does not depend on $q$. Combining \eqref{eq:bound_first_term} and \eqref{eq:bound_second_term} yields
    \begin{align*}
        &\big| \E^h[v^q(h - Tx) - \hat{v}^q(h - Tx)] \big| \\
        &\qquad \leq C \cdot \| q \|_1 \cdot \sum_{i=1}^m \E\big[ \totvar f_i(\cdot|h_{-i}) \big],
    \end{align*}
    where $C := \gamma \sum_{k \in \bar{K}} \Big( \frac{1}{2} | \det B^k | +  \sum_{j = 1}^m D_{kj} \Big)$. The result now follows from the observation that the right-hand side above does not depend on $x$ and $T$.
\end{proof}

Lemma~\ref{lemma:bound_given_q} provides an error bound for the approximation error $\| Q - \hat{Q}\|_\infty$ under the assumption that $q$ and $T$ are fixed. Compared with the error bound from \cite{romeijnders2016general}, restated in Lemma~\ref{lemma:error_bound_literature}, the dependence of the error bound in Lemma~\ref{lemma:bound_given_q} on the second-stage cost vector $q$ is made explicit. Extending the result to a setting where $T$ is random yields a parametric error bound in $q$, which constitutes the first main result of our paper.

\begin{theorem} \label{theorem:error_bound_fixed_q}
Consider the recourse function $Q$ and its shifted LP-relaxation approximation $\hat{Q}$ from Definition~\ref{def:shifted_LP-relaxation} and assume that only $q \in \Xi^Q$ is fixed. Then, there exist finite constants $C_1, C_2 > 0$, not depending on $q$, such that for all $f \in \mathcal{H}^m$, we have
\begin{align*}
    \| Q - \hat{Q} \|_\infty &\leq C_1 \cdot \| q \|_1 \cdot \sum_{i=1}^m \E\big[ \totvar f_i(\cdot|h_{-i}) \big],
\end{align*}
and
\begin{align*}
    \| Q - \tilde{Q}_\alpha \|_\infty &\leq C_2 \cdot \| q \|_1 \cdot \sum_{i=1}^m \E\big[ \totvar f_i(\cdot|h_{-i}) \big].    
\end{align*}
\end{theorem}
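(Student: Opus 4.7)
The plan is to reduce Theorem~\ref{theorem:error_bound_fixed_q} to Lemma~\ref{lemma:bound_given_q} via a conditioning argument on the random technology matrix $T$. The key observation is that the constant $C$ in Lemma~\ref{lemma:bound_given_q} is uniform in $T$, so taking an outer expectation over $T$ preserves the bound without introducing new $T$-dependence.

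First, I would fix $x \in \R^{n_1}$ and use the independence of $(q,T)$ and $h$ from Assumption~\ref{ass:main}\eqref{ass:distributional}, together with the tower property and Jensen's inequality applied to the convex function $|\cdot|$, to write
\begin{align*}
    |Q(x) - \hat{Q}(x)| &= \Big| \E^T\big[ \E^{h|T}[v^q(h - Tx) - \hat{v}^q(h - Tx)] \big] \Big| \\
    &\leq \E^T\Big[ \big| \E^{h|T}[v^q(h - Tx) - \hat{v}^q(h - Tx)] \big| \Big].
\end{align*}
Then, for each fixed realization of $T$, the inner conditional expectation is precisely the quantity bounded in Lemma~\ref{lemma:bound_given_q}, applied with this $T$ and the given fixed $q$. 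Since $C$ and the right-hand side in that lemma do not depend on $T$, I obtain
\begin{align*}
    \big| \E^{h|T}[v^q(h - Tx) - \hat{v}^q(h - Tx)] \big| \leq C \cdot \|q\|_1 \cdot \sum_{i=1}^m \E\big[ \totvar f_i(\cdot|h_{-i}) \big].
\end{align*}
Taking expectation over $T$ on both sides leaves the right-hand side unchanged, and taking the supremum over $x \in \R^{n_1}$ yields the first claim with $C_1 := C$.

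For the second inequality, I would repeat exactly the same conditioning argument but with $\hat{v}^q$, $\hat{Q}$ replaced by $\tilde{v}_\alpha^q$, $\tilde{Q}_\alpha$. To do so I need an analogue of Lemma~\ref{lemma:bound_given_q} for the $\alpha$-approximation, giving a constant $C'$ independent of $T$ and $q$. As the authors note at the start of Sections~\ref{subsec:asymptotic_periodicity} and \ref{subsec:value_function_approx_bound}, analogues of Lemma~\ref{lemma:sigma_bar_k} (asymptotic $B^k$-periodicity of $\tilde{v}_\alpha^q - v^q$ on the same shifted cones $\bar{\sigma}^k + \Lambda^k$) and Lemma~\ref{lemma:bound_v_v_hat} (a uniform bound scaling linearly in $\|q\|_1$) go through for $\tilde{v}_\alpha^q$ with essentially the same arguments, since both proofs rely only on the structure of the dual vertices $\lambda^q_k$ and the periodic remainder terms $\psi^q_k$. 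Feeding these into the proof template of Lemma~\ref{lemma:bound_given_q} yields the required analogue, and the $T$-conditioning argument then gives $\|Q - \tilde{Q}_\alpha\|_\infty \leq C_2 \cdot \|q\|_1 \cdot \sum_{i=1}^m \E[\totvar f_i(\cdot|h_{-i})]$ with $C_2 := C'$.

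The hard part is not really the $T$-extension itself, which is essentially a one-line Jensen step made possible by the fact that Lemma~\ref{lemma:bound_given_q} was carefully stated with $C$ free of both $T$ and $q$. Rather, the only substantive work is verifying (or more precisely, making explicit) the $\alpha$-approximation analogue of Lemma~\ref{lemma:bound_given_q}. This is a bit delicate because Definition~\ref{def:alpha-approximation} treats $h$ and $Tx$ asymmetrically, so one must check that the shifted cones $\bar{\sigma}^k + \Lambda^k$ from Lemma~\ref{lemma:sigma_bar_k} can be chosen independently of $\alpha$ and that the associated probability bound on the exceptional set $\mathcal{N}$ still uses only conditional one-dimensional marginals of $h$. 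Once that is in hand, the conditioning over $T$ is immediate.
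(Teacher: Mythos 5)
Your proposal is correct and follows essentially the same route as the paper: the paper's own proof applies \eqref{eq:approx_error_q_t_h} (Jensen plus the independence of $(q,T)$ and $h$) together with Lemma~\ref{lemma:bound_given_q}, takes the expectation over $T$ of a right-hand side that does not depend on $T$, and notes the second inequality follows analogously. Your added remarks on what the $\alpha$-approximation analogue of Lemma~\ref{lemma:bound_given_q} requires are accurate and, if anything, more explicit than the paper's one-line appeal to analogy.
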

\begin{proof}
    We prove the first inequality; the second inequality follows analogously. Let $x \in \R^{n_1}$ be given. Then, by \eqref{eq:approx_error_q_t_h} and Lemma~\ref{lemma:bound_given_q}, there exists $C_1>0$, not depending on $q$ and $T$, such that for every $f \in \mathcal{H}^m$,
    \begin{align*}
        | Q(x) - \hat{Q}(x) | &\leq \E^{T}\bigg[ C_1 \cdot \| q \|_1 \cdot \sum_{i=1}^m \E\big[ \totvar f_i(\cdot|h_{-i}) \big] \bigg]\\
        &= C_1 \cdot \| q \|_1 \cdot \sum_{i=1}^m \E\big[ \totvar f_i(\cdot|h_{-i}) \big].
    \end{align*}
    The result follows from the fact that the right-hand side above does not depend on the value of $x$.
\end{proof}

Theorem~\ref{theorem:error_bound_fixed_q} provides parametric error bounds that explicitly depend on the second-stage cost vector $q$. We find that the error bound grows linearly in the $\ell_1$-norm of $q$, which we might interpret as a measure of the ``magnitude'' of $q$. Interestingly, this results generalizes the result in Theorem~5 of \cite{kleinhaneveld2006simple}, which provides an error bound for $\alpha$-approximations of simple integer recourse models that scale linearly in the sum of the elements of $q$, which are assumed to be non-negative in that paper. While the result in \cite{kleinhaneveld2006simple} only applies to the very special case of simple integer recourse, our Theorem~\ref{theorem:error_bound_fixed_q} holds for much more general models.

Finally, we extend Theorem~\ref{theorem:error_bound_fixed_q} to a setting where $q$ is random. This yields the second main result of our paper.

\begin{theorem}\label{theorem:error_bound}
    Consider the recourse function $Q$ from \eqref{eq:model} and its convex approximations $\hat{Q}$ and $\tilde{Q}_\alpha$ from Definition~\ref{def:shifted_LP-relaxation} and \ref{def:alpha-approximation}, respectively, and suppose that Assumption~\ref{ass:main} holds. Then, there exist constants $C_1, C_2 >0$ such that for every $f \in \mathcal{H}^m$, we have
    \begin{align*}
        \| Q - \hat{Q} \|_\infty \leq C_1 \cdot \E\big[ \| q \|_1 \big] \sum_{i=1}^m \E\big[ \totvar f_i(\cdot | h_{-i}) \big],
    \end{align*}
    and
    \begin{align*}
        \| Q - \tilde{Q}_\alpha \|_\infty \leq C_2 \cdot \E\big[ \| q \|_1 \big] \sum_{i=1}^m \E\big[ \totvar f_i(\cdot | h_{-i}) \big].
    \end{align*}
\end{theorem}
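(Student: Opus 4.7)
The plan is a direct reduction to Theorem~\ref{theorem:error_bound_fixed_q} by conditioning on $(q,T)$ and then integrating. The crucial point is that the constants $C_1,C_2$ furnished by Theorem~\ref{theorem:error_bound_fixed_q} were shown to be independent of $q$, so after bounding the conditional error pointwise we only accumulate a factor $\E[\|q\|_1]$ when we take the outer expectation. I will focus on the bound for $\hat{Q}$; the bound for $\tilde{Q}_\alpha$ will follow by the same argument applied with the $\alpha$-approximation analogue of Lemma~\ref{lemma:bound_given_q}.

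First, I would fix $x \in \R^{n_1}$ and start from inequality \eqref{eq:approx_error_q_t_h}, which uses Jensen's inequality together with the pairwise independence of $(q,T)$ and $h$ from Assumption~\ref{ass:main}\eqref{ass:distributional}:
\begin{align*}
    |Q(x) - \hat{Q}(x)| \leq \E^{q,T}\bigg[ \Big| \E^{h \mid q,T}\big[ v^q(h - Tx) - \hat{v}^q(h - Tx) \big] \Big| \bigg].
\end{align*}
Thanks to the independence assumption, the conditional law of $h$ given $(q,T)$ is the unconditional law with density $f \in \mathcal{H}^m$, so Theorem~\ref{theorem:error_bound_fixed_q} applies verbatim to the inner conditional expectation for each realization of $(q,T)$, with a constant $C_1$ that does not depend on $q$ or $T$.

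Next, I would apply Theorem~\ref{theorem:error_bound_fixed_q} pointwise to obtain
\begin{align*}
    \Big| \E^{h \mid q,T}\big[ v^q(h - Tx) - \hat{v}^q(h - Tx) \big] \Big| \leq C_1 \cdot \|q\|_1 \cdot \sum_{i=1}^m \E^{h_{-i}}\big[ \totvar f_i(\cdot \mid h_{-i}) \big],
\end{align*}
and then take expectation in $(q,T)$. Since only $\|q\|_1$ depends on these random variables, linearity yields
\begin{align*}
    |Q(x) - \hat{Q}(x)| \leq C_1 \cdot \E\big[ \|q\|_1 \big] \cdot \sum_{i=1}^m \E^{h_{-i}}\big[ \totvar f_i(\cdot \mid h_{-i}) \big].
\end{align*}
The right-hand side is finite by Assumption~\ref{ass:main}\eqref{ass:finite_exp} and is independent of $x$, so taking the supremum over $x$ gives the desired uniform bound. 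Exactly the same reasoning, applied to the $\alpha$-approximation via the second inequality in Theorem~\ref{theorem:error_bound_fixed_q}, delivers the bound for $\tilde{Q}_\alpha$.

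The main obstacle here is conceptual rather than technical: it lies in having the constant $C_1$ of Theorem~\ref{theorem:error_bound_fixed_q} be genuinely free of $q$, since otherwise one would be forced back into the finite-support regime of \cite{vanbeesten2020convex}. This uniformity was already secured in Sections~\ref{subsec:asymptotic_periodicity} and \ref{subsec:value_function_approx_bound}, through the $q$-independent shift vectors $\bar{\sigma}^k$ of Lemma~\ref{lemma:sigma_bar_k}, the $q$-independent index set $\bar{K}$, and the explicit $\|q\|_1$-linear bound on $\|v^q - \hat{v}^q\|_\infty$ of Lemma~\ref{lemma:bound_v_v_hat}. With that infrastructure in place, the present theorem reduces to a single application of the tower property and Assumption~\ref{ass:main}\eqref{ass:finite_exp}, with no additional hypothesis on the dispersion of $q$ required, which is precisely the message the paper advertises.
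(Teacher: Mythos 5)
Your proposal is correct and follows essentially the same route as the paper: start from \eqref{eq:approx_error_q_t_h}, bound the inner conditional expectation by a constant times $\|q\|_1$ times the total-variation term, and integrate over $(q,T)$ to pick up the factor $\E\big[\|q\|_1\big]$. The only cosmetic difference is that the paper invokes Lemma~\ref{lemma:bound_given_q} (both $q$ and $T$ fixed) for the inner bound, which is the precise statement matching the conditional expectation, whereas you cite Theorem~\ref{theorem:error_bound_fixed_q}; since both share the same $q$- and $T$-independent constant, the argument is unaffected.
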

\begin{proof}
    We prove the first inequality; the second inequality follows analogously. Let $x \in \R^{n_1}$ be given. Then, by applying Lemma~\ref{lemma:bound_given_q} to \eqref{eq:approx_error_q_t_h} we find that there exists $C_1 >0$ such that for every $f \in \mathcal{H}^m$,
    \begin{align*}
        \big| Q(x) - \hat{Q}(x) \big| &\leq \E^{q,T}\bigg[ C_1 \cdot \| q \|_1 \cdot \sum_{i=1}^m \E\big[ \totvar f_i(\cdot|h_{-i}) \big] \bigg] \\
        &= C_1 \cdot \E \big[ \| q \|_1 \big] \cdot \sum_{i=1}^m \E\big[ \totvar f_i(\cdot|h_{-i}) \big].
    \end{align*}
    The result now follows from the observation that the right-hand side above does not depend on the value of $x$.
\end{proof}

Theorem~\ref{theorem:error_bound} provides error bounds that explicitly depend on the distribution of $q$. The error bounds can be represented as the product of two non-negative factors: one depending on the distribution of $q$ and another depending on the distribution of $h$. The first factor, $\E \big[ \| q \|_1 \big]$, captures the dependence of the error bound on the distribution of $q$. Following the discussion above, we might interpret this as the average ``magnitude'' of $q$. It shows that our error bound is indeed finite if $\E \big[ \| q \|_1 \big] < +\infty$, i.e., under Assumption~\ref{ass:main}\eqref{ass:finite_exp}. Note that besides this assumption, no other assumptions about the distribution of $q$ are made. In this regard, we improve upon the error bounds from Theorem~\ref{theorem:error_bound} in \cite{vanbeesten2020convex}, which only hold if $q$ is discretely distributed on a finite support. In particular, our error bounds can also deal with, e.g., continuously distributed $q$.

The second factor, related to the distribution of $h$, is of the same form as error bounds from the literature. It depends on the total variations of the one-dimensional conditional density functions of the random right-hand side vector $h$. It converges to zero as these total variations go to zero. Practically speaking, this means that our convex approximations are good if the dispersion in the distribution of $h$ is large. Another way of interpreting this is that a highly dispersed distribution of $h$ leads to a ``near-convex'' model. Interestingly, a similar ``convexification'' effect is not observed in terms of the distribution of $q$: the dispersion of this distribution does not affect the error bounds. Only the average ``magnitude'' $\E\big[\| q \|_1 \big]$ matters.

\section{Conclusion} \label{sec:conclusion}

We consider performance guarantees for convex approximations of MIR models in the form of error bounds: upper bounds on the approximation error. In contrast with the literature, we derive \textit{parametric} error bounds that explicitly depend on the second-stage cost vector $q$ or its distribution, in case $q$ is random. We consider two particular convex approximations from the literature: the shifted LP-relaxation approximation and the $\alpha$-approximation and for each approximation we derive a corresponding error bound. 

Using properties of the value function approximation error, we first derive an error bound that holds when $q$ is fixed. Although such error bounds exist in the literature, our error bound is special in the sense that its dependence on the second-stage cost vector $q$ is made explicit: the bound scales linearly in the $\ell_1$ norm of $q$. We might interpret this scaling factor as the ``magnitude'' of $q$. Next, we use this bound to derive an error bound that holds when $q$ is random. The error bound scales linearly in the expected value $\E \big[ \| q \|_1 \big]$, which we might interpret as the \textit{average} ``magnitude'' of $q$. Hence, our convex approximations are good if this expected value is small.

Future research may be aimed at deriving error bounds under even more relaxed distributional assumptions in the MIR model. For instance, it would be interesting to investigate the case where not all elements of $h$ are random, or where some elements of $h$ are fully dependent.

%\noteg{Mention numerical experiments here? (This may trigger reviewers to ask for them, which we don't want)}

\bibliography{references}

\end{document}